\author{Jeremy Brazas}
\address{Department of Mathematics; West Chester University; 25 University Ave.; West Chester, PA, 19383}
\title{On the discontinuity of the $\pi_1$-action}
\keywords{quasitopological homotopy group, quasitopological fundamental group, continuous group action}
\newtheorem{theorem}{Theorem}
\newtheorem{lemma}[theorem]{Lemma}
\newtheorem{proposition}[theorem]{Proposition}
\newtheorem{corollary}[theorem]{Corollary}
\theoremstyle{definition}
\theoremstyle{definition}
\theoremstyle{definition}
\theoremstyle{definition}\newtheorem{problem}[theorem]{Problem}
\theoremstyle{definition}\newtheorem{remark}[theorem]{Remark}
\newcommand{\wt}{\widetilde}
\newcommand{\bbr}{\mathbb{R}}
\newcommand{\bbh}{\mathbb{H}}
\newcommand{\bbz}{\mathbb{Z}}
\newcommand{\bbn}{\mathbb{N}}
\newcommand{\ui}{[0,1]}
\newcommand{\bfz}{\mathbf{0}}
\newcommand{\bspaces}{\mathbf{Top}_{\ast}}
\begin{document}
\begin{abstract}
We show the classical $\pi_1$-action on the $n$-th homotopy group can fail to be continuous for any $n$ when the homotopy groups are equipped with the natural quotient topology. In particular, we prove the action $\pi_1(X)\times \pi_n(X)\to\pi_n(X)$ fails to be continuous for a one-point union $X=A\vee \mathbb{H}_n$ where $A$ is an aspherical space such that $\pi_1(A)$ is a topological group and $\mathbb{H}_n$ is the $(n-1)$-connected, n-dimensional Hawaiian earring space $\mathbb{H}_n$ for which $\pi_n(\mathbb{H}_n)$ is a topological abelian group.
\end{abstract}
\maketitle

\section{Introduction}
For a based topological space $X$, let $\Omega^n(X)$ denote the space of based maps $S^n\to X$ with the compact-open topology. For $n\geq 1$, the n-th homotopy group $\pi_n(X)$ may be endowed with the natural quotient topology inherited from $\Omega^n(X)$ so that the map $q:\Omega^n(X)\to \pi_n(X)$, $q(f)=[f]$ identifying based homotopy classes is a topological quotient map.

If $X$ has the homotopy type of a CW-complex, then $\pi_n(X)$ is a discrete group for all $n\geq 1$ \cite{CMdiscrete,GHMMthg}. More generally, $\pi_n(X)$ can fail to be a topological group in any dimension. For instance, if $\bbh$ is the Hawaiian earring, then $\pi_1(\bbh)$ fails to be a topological group \cite{Fab10HE}. Other counterexamples for $n=1$ related to free topological groups appear in \cite{Br10.1}. The higher homotopy groups are slightly better behaved since if $\bbh_n$ is the n-dimensional Hawaiian earring, $\pi_n(\bbh_n)$, $n\geq 2$ is a topological group isomorphic to the direct product $\prod_{n=1}^{\infty}\bbz$ of discrete groups \cite{GHMMnhawaiian}. The improved behavior for $n\geq 2$ is essentially because, as suggested in \cite{BarMil}, these groups are commutative in an infinitary sense. Despite this case, examples in \cite{Fab11CG} show that multiplication in $\pi_n(X)$ can still fail to be continuous.

Although multiplication in homotopy groups can fail to be continuous, $\pi_n:\bspaces\to\mathbf{qTopGrp}$ is a homotopy invariant functor from the category of based spaces to the category of quasitopological groups\footnote{A \textit{quasitopological group} $G$ is a group equipped with a topology such that inversion is continuous and multiplication is continuous in each variable. See \cite{AT08} for a general theory of quasitopological groups.} (abelian when $n\geq 2$) and continuous group homomorphisms. The topological structure of $\pi_1(X)$ is studied in depth in \cite{Br10.1} and \cite{BFqtop}. 

In the current paper, we address the continuity of the natural action of $\pi_1(X)$ on $\pi_n(X)$. For each $n\geq 1$, we construct a compact metric space $X\subseteq \bbr^{n+1}$ such that the action $\pi_1(X)\times \pi_n(X)\to \pi_n(X)$ is not continuous. Our example is constructed as a one-point union $X=A\vee \bbh_n$ where:
\begin{itemize}
\item  $\pi_1(A)$ is a topological group and $A$ is aspherical, i.e. $\pi_n(A)=0$ for $n\geq 2$,
\item $\bbh_n$ is $(n-1)$-connected and $\pi_n(\bbh_n)$ is a topological abelian group.
\end{itemize}

A fruitful approach to giving the homotopy groups a topology that makes them genuine topological groups is to apply a reflection functor $\tau$ to $\pi_n(X)$, which deletes the smallest set of open sets from the quotient topology so that one obtains continuity of multiplication. See \cite{Braztopgrp} for more on this construction where the resulting topological groups are denoted $\pi_{n}^{\tau}(X)$. For $n=1$, this topology has helped to prove new results in topological group theory \cite{BrazOpenSub}. An analogous construction has been carried out for the universal path space in \cite{VZtautop}. Since writing the paper \cite{Braztopgrp}, the author has been unable to confirm the continuity of the $\pi_1$-action for these ``topological homotopy groups." Since these groups are topological groups, the case $n=1$ is trivial since the $\pi_1$-action is just the action by conjugation and continuity follows from the continuity of the group operations.

\begin{problem}
For any $n>1$ and space $X$, must the natural action $\pi_{1}^{\tau}(X)\times \pi_{n}^{\tau}(X)\to \pi_{n}^{\tau}(X)$ be continuous?
\end{problem}

\noindent\textbf{Acknowledgements.} The author is grateful to Paul Fabel for helpful conversations related to this topic.

\section{Separate continuity of the $\pi_1$-action}\label{quotientsection}

Given based topological spaces $(X,x_0)$ and $(Y,y_0)$, let $Y^X$ denote the space of basepoint-preserving maps $f:(X,x_0)\to (Y,y_0)$ with the compact-open topology generated by subbasic sets $\langle K,U\rangle =\{f\in Y^X|f(K)\subseteq U\}$ for $K\subseteq X$ compact and $U\subseteq Y$ open. If $x_0\in A\subseteq X$, let $Y^{(X,A)}\subseteq Y^X$ be the subspace of maps $f$ for which $f(A)=y_0$. Let $[X,Y]$ be the set of basepoint-preserving homotopy classes of maps equipped with the quotient topology with respect to the map $Y^X\to [X,Y]$, $f\mapsto [f]$ that identifies homotopy classes. In particular, $Y^{S^n}=\Omega^n(Y)$ and $\pi_n(Y)=[S^n,Y]$. For $\alpha_1,\alpha_2,...,\alpha_k\in \Omega^n(Y)$, $\alpha_1\cdot \alpha_2\cdots \alpha_k$ denotes the map $S^n\to Y$ which is the standard k-fold concatenation representing the product $[\alpha_1][\alpha_2]\cdots [\alpha_n]$ in $\pi_n(Y)$.

We recall the definition of the $\pi_1$-action as given in \cite{WhiteheadEOH}. Suppose the space $X$ with basepoint $x_0\in X$ is well-pointed, i.e. $\{x_0\}\to X$ is a cofibration. Set $Z=I\times \{x_0\}\cup X\times \{0\}$ where $I=[0,1]$ is the unit interval. The condition that $(X,x_0)$ is well-pointed is equivalent to the existence of a retraction $r:X\times I\to Z$. Fixing such a retraction, let $i_1:X\to X\times \{1\}$, $i_1(x)=(x,1)$ be the inclusion. 

For the moment, we identify $\Omega(Y)$ with the mapping space $Y^{(I,\{0,1\})}$. Two maps $\alpha\in \Omega(Y)$ and $\gamma\in Y^X$ uniquely determine a map $f:Z\to Y$ by $f(t,x_0)=\alpha^{-}(t)$ and $f(x,0)=\gamma(x)$. Let $\alpha\ast\gamma=f\circ r\circ i:X\to Y$ and note this is a based map. This construction is well-defined on homotopy classes in the sense that $\pi_1(Y)\times [X,Y]\to [X,Y]$, $([\alpha],[\gamma])\mapsto [\alpha\ast\gamma]$ is a group action on the set $[X,Y]$.

\begin{proposition}\label{jointcont}
The map $\Omega(Y)\times Y^X\to Y^X$, $(\alpha,\gamma)\mapsto \alpha\ast\gamma$ is continuous and the group action $\pi_1(Y)\times [X,Y]\to [X,Y]$ is continuous in each variable.
\end{proposition}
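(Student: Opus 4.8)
The plan is to prove the stronger assertion first — joint continuity of the ``action before passing to homotopy'' map $\mu:\Omega(Y)\times Y^X\to Y^X$, $\mu(\alpha,\gamma)=\alpha\ast\gamma$ — and then deduce the continuity of the induced action in each variable by a quotient-map argument. For the first part, I would unwind the definition: $\alpha\ast\gamma=f_{(\alpha,\gamma)}\circ r\circ i_1$, where $r:X\times I\to Z$ is the fixed retraction, $i_1:X\to X\times I$ is the fixed inclusion at level $1$, and $f_{(\alpha,\gamma)}:Z\to Y$ is the map glued from $\alpha^-$ on $I\times\{x_0\}$ and $\gamma$ on $X\times\{0\}$. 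Since $r$ and $i_1$ are fixed continuous maps, composition with them is continuous $Y^Z\to Y^X$ (precomposition by a fixed map is always continuous in the compact-open topology). So it suffices to show that the gluing map $\Omega(Y)\times Y^X\to Y^Z$, $(\alpha,\gamma)\mapsto f_{(\alpha,\gamma)}$, is continuous.

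For that gluing map, note $Z=I\times\{x_0\}\cup X\times\{0\}$ is the pushout of $I\times\{x_0\}$ and $X\times\{0\}$ along the point $(x_0,0)$, so $Y^Z$ embeds as the subspace of $Y^{I\times\{x_0\}}\times Y^{X\times\{0\}}$ of pairs agreeing at $(x_0,0)$ — this is the standard fact that $Y^{A\cup B}\hookrightarrow Y^A\times Y^B$ is an embedding when $\{A,B\}$ is a suitable closed cover (here $A\cap B$ is a single point, so compatibility is automatic: $\alpha^-(1)=y_0=\gamma(x_0)$). Under the identifications $Y^{I\times\{x_0\}}\cong Y^I$ and $Y^{X\times\{0\}}\cong Y^X$, the gluing map is $(\alpha,\gamma)\mapsto(\alpha^-,\gamma)$. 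Inversion $\alpha\mapsto\alpha^-$ of loops is continuous (it is precomposition with $t\mapsto 1-t$), and $\gamma\mapsto\gamma$ is the identity, so the gluing map is continuous. Composing, $\mu$ is continuous.

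For the second statement, let $q_Y:\Omega(Y)\to\pi_1(Y)$ and $q:Y^X\to[X,Y]$ be the quotient maps. Fixing $[\alpha]\in\pi_1(Y)$, the map $[X,Y]\to[X,Y]$, $[\gamma]\mapsto[\alpha\ast\gamma]$ fits into a commuting square with $Y^X\xrightarrow{\gamma\mapsto\alpha\ast\gamma}Y^X\xrightarrow{q}[X,Y]$ on top and $q$ on the left; since the top composite is continuous (it is $\mu(\alpha,-)$ followed by $q$) and $q$ is a quotient map, the induced map on $[X,Y]$ is continuous. Fixing $[\gamma]\in[X,Y]$, the same argument with $q_Y$ in place of the left $q$ — using that $q_Y$ is a quotient map and that the composite $\Omega(Y)\xrightarrow{\alpha\mapsto\alpha\ast\gamma}Y^X\xrightarrow{q}[X,Y]$ is continuous and constant on homotopy classes of loops (this well-definedness is exactly the assertion, recalled in the text, that the action is well-defined on $\pi_1(Y)$) — gives continuity in the first variable.

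The only genuinely delicate point is the embedding claim $Y^Z\hookrightarrow Y^I\times Y^X$ with the correct subspace topology, i.e. that a map into $Z$ is ``continuous iff continuous on each piece and they glue'' at the level of mapping \emph{spaces}, not just sets; this is where one uses that $I\times\{x_0\}$ and $X\times\{0\}$ are closed in $Z$ (with $Z$ itself closed in $X\times I$) and that their intersection is a point, so no compatibility condition beyond the single shared value is needed and the subbasic open sets of $Y^Z$ are generated by those pulled back from the two factors. Everything else is formal manipulation of the exponential/compacting properties of the compact-open topology — continuity of precomposition by fixed maps and the universal property of the quotient — so I do not expect obstacles there. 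Note no hypothesis on $X$ beyond well-pointedness is used, and none on $Y$ at all, consistent with the generality of the statement.
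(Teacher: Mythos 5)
Your proposal is correct and follows essentially the same route as the paper: both factor $\alpha\ast\gamma$ as precomposition of the glued map $f_{(\alpha,\gamma)}$ with the fixed maps $r$ and $i_1$, identify the space of glued maps with (a subspace of) the product $\Omega(Y)\times Y^X$, and then deduce separate continuity of the induced action by fixing one variable and invoking the universal property of the quotient maps $\Omega(Y)\to\pi_1(Y)$ and $Y^X\to[X,Y]$. The only difference is cosmetic: you justify the embedding $Y^Z\hookrightarrow Y^I\times Y^X$ explicitly where the paper simply asserts the canonical homeomorphism $Y^{S^1}\times Y^X\cong Y^{(Z,A)}$.
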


\begin{proof}
Let $A=\{(x_0,0),(x_0,1)\}\subseteq X\times I$. Since there is a canonical homeomorphisms $Y^{S^1}\times Y^X \cong Y^{S^1\vee X}\cong Y^{(Z,A)}$, $(\alpha,\gamma)\mapsto f$, it suffices check the map $Y^{(Z,A)}\to Y^X$, $f\mapsto f\circ r\circ i$ is continuous. It is a fact from elementary topology that if $g:X\to X'$ is a map, then the induced function $Y^{X'}\to Y^X$, $f\mapsto  f\circ g$ is continuous. We apply this fact to the following maps:
\begin{enumerate}
\item the map $r:X\times I\to Z$ induces a map $Y^{Z}\to Y^{X\times I}$ which restricts to a well-defined (since $r$ is a retraction) continuous map $Y^{(Z,A)}\to Y^{(X\times I,A)}$ on subspaces,
\item the map $i_1:X\to X\times I$, induces a map $Y^{X\times I}\to Y^X$ which restricts to a map $Y^{(X\times I,A)}\to Y^X$.
\end{enumerate}
The composition of these continuous maps agrees with the function $(\alpha,\gamma)\mapsto \alpha\ast\gamma$. Therefore, it is continuous.

For the group action, consider the commuting diagram
\[\xymatrix{
\Omega(Y)\times Y^X \ar[d]_-{q_1\times q_2} \ar[r]^-{\ast} & Y^X \ar[d]^-{q_2}\\
\pi_1(Y)\times [X,Y] \ar[r] & [X,Y]
}\]
where the top map is the continuous action of loops and where $q_1$ and $q_2$ are the respective quotient maps. Although $q_1\times q_2$ is not necessarily a quotient map, if we fix $\alpha\in \Omega(Y)$ and replace $\Omega(Y)$ and $\pi_1(Y)$ with the one-point subspaces $\{\alpha\}$ and $\{[\alpha]\}$ respectively, then the corresponding restriction of $q_1\times q_2$ is a quotient map (since it is just $q_2$ times a trivial map). Since the top composition is continuous, it follows from the universal property of quotient spaces that the bottom map $\{[\alpha]\}\times [X,Y]\to [X,Y]$, which is the restriction of the group action, is continuous. Fixing $\gamma$ in a similar fashion shows that the restriction $\pi_1(Y)\times \{[\gamma]\}\to [X,Y]$ is continuous. Thus the action is continuous in each variable.
\end{proof}

\begin{corollary}
If $\pi_1(X)$ or $\pi_n(X)$ is discrete, then the $\pi_1$-action $\pi_1(X)\times \pi_n(X)\to \pi_n(X)$ is continuous.
\end{corollary}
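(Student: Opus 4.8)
The plan is to obtain joint continuity as a purely formal consequence of the separate continuity established in Proposition~\ref{jointcont}, splitting according to which of the two hypotheses holds.

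First I would handle the case where $\pi_1(X)$ is discrete. Here I would invoke the elementary fact that a product with a discrete space is a topological coproduct, so that $\pi_1(X)\times \pi_n(X)=\coprod_{[\alpha]\in\pi_1(X)}\{[\alpha]\}\times\pi_n(X)$. Since a map out of a coproduct is continuous precisely when each of its restrictions is, it suffices to check the action on each summand $\{[\alpha]\}\times\pi_n(X)$. Under the obvious identification of this summand with $\pi_n(X)$, that restriction is the map $[\gamma]\mapsto[\alpha]\ast[\gamma]$, which is continuous by the ``continuity in the second variable'' half of Proposition~\ref{jointcont}. Hence the action is continuous.

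For the case where $\pi_n(X)$ is discrete, I would argue pointwise. Fix $([\alpha_0],[\gamma_0])\in\pi_1(X)\times\pi_n(X)$; it is enough to produce a neighborhood mapped by the action into the open singleton $\{[\alpha_0]\ast[\gamma_0]\}$. By the ``continuity in the first variable'' half of Proposition~\ref{jointcont}, the map $\pi_1(X)\to\pi_n(X)$, $[\alpha]\mapsto[\alpha]\ast[\gamma_0]$, is continuous, so the preimage $V$ of $\{[\alpha_0]\ast[\gamma_0]\}$ is an open neighborhood of $[\alpha_0]$. Then $V\times\{[\gamma_0]\}$ is an open neighborhood of $([\alpha_0],[\gamma_0])$ whose image under the action is the single point $[\alpha_0]\ast[\gamma_0]$, giving continuity there.

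There is no real obstacle; the only points needing any attention are recalling that a product with a discrete factor is a coproduct in the first case and invoking the correct one of the two separate-continuity conclusions in each case. One could alternatively seek a single unified argument by showing that under either hypothesis the map $q_1\times q_2$ from the proof of Proposition~\ref{jointcont} is already a quotient map (when $\pi_1(X)$ is discrete this follows from the coproduct decomposition, and a similar bookkeeping handles the discrete $\pi_n(X)$ case), and then concluding from the universal property of quotients as in that proof; but the two-case argument above appears cleaner and shorter.
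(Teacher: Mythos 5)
Your proof is correct, and it is essentially the intended argument: the paper states this corollary without proof as an immediate consequence of the separate continuity established in Proposition~\ref{jointcont}, which is exactly what you spell out. Both of your cases (the coproduct decomposition when $\pi_1(X)$ is discrete, and the pointwise argument using the open singleton $\{[\alpha_0]\ast[\gamma_0]\}$ when $\pi_n(X)$ is discrete) are sound, and your remark that one could instead verify $q_1\times q_2$ is a quotient map matches the mechanism used in the paper's proof of the subsequent first-countability corollary.
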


\begin{corollary}
Suppose $X$ is a metric space and $\pi_1(X)$ or $\pi_n(X)$ are first countable, then the $\pi_1$-action $\pi_1(X)\times \pi_n(X)\to \pi_n(X)$ is continuous.
\end{corollary}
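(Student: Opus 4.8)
The plan is to reduce the statement to a sequential-continuity check and then lift convergent sequences from $\pi_1(X)$ and $\pi_n(X)$ up to the mapping spaces, where Proposition \ref{jointcont} already supplies the needed joint continuity. First I would note that, since $X$ is metrizable and $S^1,S^n$ are compact metric spaces, the compact-open topologies on $\Omega(X)=X^{S^1}$ and $\Omega^n(X)=X^{S^n}$ coincide with the respective topologies of uniform convergence; hence $\Omega(X)$ and $\Omega^n(X)$ are metrizable, and consequently $\pi_1(X)$ and $\pi_n(X)$ are sequential spaces, being quotients of metrizable spaces under the quotient maps $q_1$ and $q_2$. Assuming, without loss of generality, that $\pi_1(X)$ is first countable (the case in which $\pi_n(X)$ is first countable is symmetric, since $\pi_1(X)$ is sequential in any event), one invokes the fact that the product of a first countable space with a sequential space is sequential to conclude that $\pi_1(X)\times\pi_n(X)$ is sequential. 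It therefore suffices to prove that the $\pi_1$-action $\pi_1(X)\times\pi_n(X)\to\pi_n(X)$, $([\alpha],[\gamma])\mapsto[\alpha\ast\gamma]$, is sequentially continuous.

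To check sequential continuity, suppose $([\alpha_k],[\gamma_k])\to([\alpha],[\gamma])$ in $\pi_1(X)\times\pi_n(X)$; using the elementary criterion that a sequence converges to a point as soon as each of its subsequences has a further subsequence converging to that point, it is enough to take an arbitrary subsequence of $([\alpha_k\ast\gamma_k])$ and produce a further subsequence converging to $[\alpha\ast\gamma]$. Here I would use the topological fact that a quotient map out of a metrizable space is \emph{subsequence-covering}: a convergent sequence in the quotient can, after passing to a subsequence, be lifted to a convergent sequence upstairs. Applying this to $q_1$, pass to a subsequence along which $[\alpha_k]$ lifts to $\alpha_k'\to\alpha'$ in $\Omega(X)$ with $q_1(\alpha_k')=[\alpha_k]$ and $[\alpha']=[\alpha]$; applying it to $q_2$ and refining once more, $[\gamma_k]$ lifts to $\gamma_k'\to\gamma'$ in $\Omega^n(X)$ with $[\gamma']=[\gamma]$. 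Proposition \ref{jointcont} then gives $\alpha_k'\ast\gamma_k'\to\alpha'\ast\gamma'$ in $\Omega^n(X)$, hence $[\alpha_k'\ast\gamma_k']\to[\alpha'\ast\gamma']$ in $\pi_n(X)$; and because the action is well defined on homotopy classes, $[\alpha_k'\ast\gamma_k']=[\alpha_k\ast\gamma_k]$ and $[\alpha'\ast\gamma']=[\alpha\ast\gamma]$, which is exactly the convergence sought.

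The metrizability of the mapping spaces, the reduction to sequential continuity, and the bookkeeping with subsequences are all routine. The hard part — and the only place where the metric and first-countability hypotheses do genuine work beyond the separate continuity already proved in Proposition \ref{jointcont} — is the pair of sequential-space facts: that a first countable space times a sequential space is sequential, and that quotient maps from metrizable spaces are subsequence-covering. The second of these is what makes Proposition \ref{jointcont} applicable at all, since separate continuity by itself does not give joint continuity; I expect getting that lifting cleanly stated and applied to be the main obstacle.
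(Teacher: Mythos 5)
Your reduction hinges on the claim that the product of a first countable space with a sequential space is sequential, and that claim is false. A classical counterexample (essentially Michael's example of a quotient map whose product with $\mathrm{id}_{\mathbb{Q}}$ is not a quotient map): let $q:\mathbb{R}\to\mathbb{R}/\mathbb{N}$ collapse the positive integers to a point $\ast$. Then $\mathbb{R}/\mathbb{N}$ is a quotient of a metric space, hence sequential (even Fr\'{e}chet--Urysohn), and $\mathbb{Q}$ is metrizable, hence first countable, yet $(\mathbb{R}/\mathbb{N})\times\mathbb{Q}$ is not sequential: choosing rationals $s_{n,m}$ with $s_{n,m}\to \sigma_n$ as $m\to\infty$ for irrationals $\sigma_n\to 0$ with $0<s_{n,m}<2\sigma_n$, the set $\{(q(n+\tfrac{1}{m}),s_{n,m})\}$ is sequentially closed but has $(\ast,0)$ in its closure. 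Since $\pi_n(X)$ is precisely a quotient of a metrizable space and nothing more, your argument breaks exactly in the case the ``or'' in the hypothesis is meant to cover: if only $\pi_1(X)$ is first countable, $\pi_1(X)\times\pi_n(X)$ need not be sequential, so sequential continuity of the action does not yield continuity. Your argument is fine when both groups are first countable, since then the product is first countable and hence sequential; notably, that is also the only case the paper's own proof addresses (it assumes ``$\pi_1(X)\times\pi_n(X)$ is first countable'' and invokes the $k$-space theorem that $q_1\times q_2$ is then a quotient map, followed by the universal property). So the disjunction is the delicate point, and the bridge you propose to it is not a theorem.

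A secondary issue: your subsequence-lifting lemma is correct for Hausdorff codomains --- one picks $x\in\overline{q^{-1}(\{y_k\})}\setminus q^{-1}(\{y_k\})$, uses that $\{y_k\}\cup\{y\}$ is compact hence closed to conclude $q(x)=y$, and lifts along a sequence in the metric domain --- but it genuinely needs $\overline{\{y_k\}}\subseteq\{y_k\}\cup\{y\}$ and uniqueness of limits to guarantee the lifted sequence converges into $q^{-1}(y)$ rather than into the fiber over some other accumulation point. The corollary assumes no separation axioms on $\pi_1(X)$ or $\pi_n(X)$ (and the paper emphasizes elsewhere that Hausdorffness of these groups is hard to come by), so this step would need an added hypothesis or a replacement. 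The paper's route, by contrast --- prove $q_1\times q_2$ is a quotient map and apply the universal property to the continuous map $\ast:\Omega(X)\times\Omega^n(X)\to\Omega^n(X)$ from Proposition \ref{jointcont} --- uses no separation axioms at all.
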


\begin{proof}
Since $X$ is metric, $\Omega(X)\times \Omega^n(X)$ is first countable. If $\pi_1(X)\times \pi_n(X)$ is first countable, then according to the general theory of $k$-spaces \cite{Brown06}, the product of quotient maps $q_1\times q_2:\Omega(X)\times \Omega^n(X)\to \pi_1(X)\times \pi_n(X)$ is a quotient map. Applying the argument used in the second half of Proposition \ref{jointcont}, the continuity of the action follows from the continuity of $\ast:\Omega(X)\times \Omega^n(X)\to \Omega^n(X)$ and the universal property of quotient maps.
\end{proof}

\section{The space $A$}

Let $D\subseteq \bbr^2$ be the closed disk having $[0,1]\times \{0\}$ as a diameter. For each $m\geq 1$, let $C_m\subseteq \bbr^2$ be the circle having $[0,\frac{m+1}{m}]\times \{0\}$ as a diameter. We write $C_{\infty}$ for the boundary of $D$ and note $C_m\to C_{\infty}$ in the Hausdorff metric. Let $e_m=(\frac{m+1}{m},0)\in C_m$ and $e_{\infty}=(1,0)\in C_{\infty}$.

Define $A=D\cup \bigcup_{m\geq 1}C_m$. We denote the origin $(0,0)$ by $\bfz$ and take this point to be the basepoint of $A$. Observe that $A$ is compact but not locally path-connected at any point of $C_{\infty}\backslash \bfz$. For $m\in\{1,2,...,\infty\}$, let $\ell_m$ be a simple closed curve traversing $C_m$ once in the counterclockwise direction such that $\ell_m\to \ell_{\infty}$ uniformly. Set $x_m=[\ell_m]\in \pi_1(A)$. Since $\ell_m\to\ell_{\infty}$, $\ell_{\infty}$ is null-homotopic, and $\Omega(A)\to \pi_1(A)$ is continuous, $x_m$ converges to the trivial element $1\in \pi_1(A)$.

\begin{figure}[H]
\centering \includegraphics[height=2in]{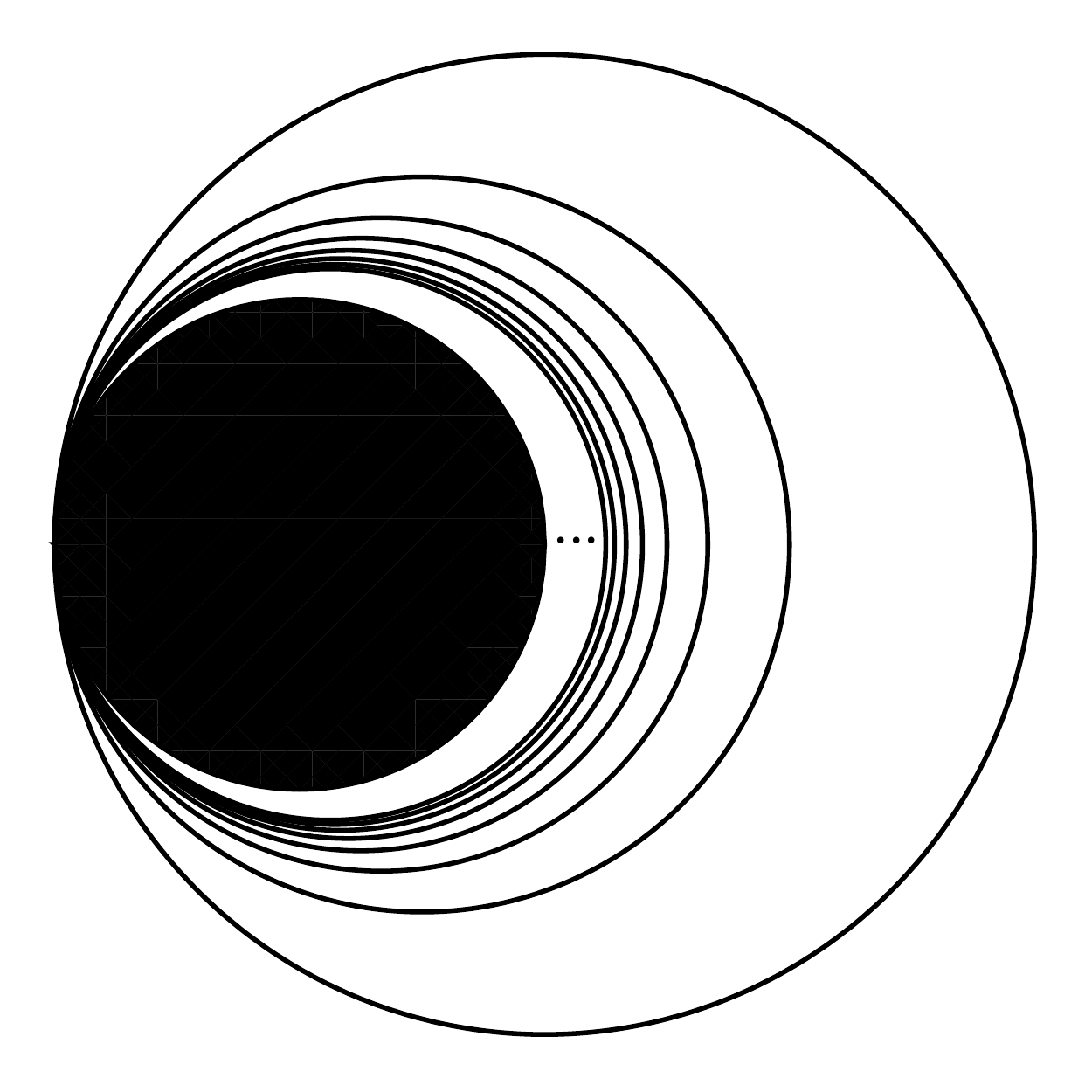}
\caption{\label{spacea}The space $A\subseteq \bbr^2$.}
\end{figure}

\begin{remark}
The space $A$ is aspherical, i.e. $\pi_n(A)=0$ for $n\geq 2$. To see this, one could apply general results about planar sets such as those in \cite{CCZAspherical}, however, it is simpler to utilize the locally path-connected coreflection $lpc(A)$ which has the same underlying set as $A$ and topology generated by the path components of open sets in $A$. In general, $X$ and $lpc(X)$ have naturally isomorphic homotopy and homology groups. In particular, since $lpc(A)$ is homotopy equivalent to a wedge of circles, $lpc(A)$ is aspherical and it follows that $A$ is aspherical.
\end{remark}

\begin{lemma}\label{finitelymanlemma}
Suppose $z_m\to z$ is a convergent sequence in a Hausdorff space $Z$ such that there exists an open neighborhood $U$ of $z$ so that none of the points $z_m$ lie in the path component of $U$ containing $z$. If $K$ is compact and locally path-connected and $f:K\to Z$ is a map, then $z_m\in f(K)$ for at most finitely many $m$.
\end{lemma}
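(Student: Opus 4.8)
The plan is to argue by contradiction. Suppose $z_m\in f(K)$ for infinitely many $m$; after passing to a subsequence we may assume $z_m\in f(K)$ for every $m$, and note that the hypothesis $z_m\notin W$ still holds along the subsequence, where $W$ denotes the path component of $U$ containing $z$. First I would lift the sequence: choose $k_m\in K$ with $f(k_m)=z_m$. Since $K$ is compact (hence countably compact), the sequence $(k_m)$ has a cluster point $k\in K$, meaning every open neighborhood of $k$ contains $k_m$ for infinitely many indices $m$. Continuity of $f$ then makes $f(k)$ a cluster point of the sequence $(f(k_m))=(z_m)$ in $Z$; since $(z_m)$ converges to $z$ and $Z$ is Hausdorff, $z$ is the unique cluster point of $(z_m)$, so $f(k)=z$.

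Next I would use local path-connectedness of $K$ at the point $k$. Because $f(k)=z\in U$, the set $f^{-1}(U)$ is an open neighborhood of $k$ in $K$, so by local path-connectedness its path component $V$ containing $k$ is open in $K$. The image $f(V)$ is path-connected (being a continuous image of a path-connected set) and contains $f(k)=z$, hence $f(V)\subseteq W$. Finally, since $k$ is a cluster point of $(k_m)$ and $V$ is an open neighborhood of $k$, there is some index $m$ with $k_m\in V$; then $z_m=f(k_m)\in f(V)\subseteq W$, contradicting $z_m\notin W$. This contradiction shows $z_m\in f(K)$ for at most finitely many $m$.

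I expect the only subtle point to be the bookkeeping with cluster points in a merely compact (not necessarily sequentially compact) Hausdorff space: one must work with a cluster point of the sequence $(k_m)$ rather than a convergent subsequence, and verify both that continuity carries cluster points of $(k_m)$ to cluster points of $(f(k_m))$ and that a convergent sequence in a Hausdorff space admits its limit as its unique cluster point. If $K$ is metrizable (as in all the applications in this paper, where $K=S^n$), this cluster point is an honest subsequential limit and the argument simplifies accordingly; everything else is a direct application of continuity of $f$ and the definition of local path-connectedness.
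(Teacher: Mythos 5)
Your proof is correct and follows essentially the same route as the paper's: lift the sequence to $K$, extract a limit point $k$ with $f(k)=z$ by compactness, and use local path-connectedness at $k$ to push infinitely many $z_m$ into the path component of $U$ containing $z$. Your use of cluster points rather than a convergent subsequence is in fact slightly more careful than the paper, which passes to a convergent subsequence using only compactness (harmless here since $K$ is metrizable in all applications, but your version covers the general compact case as stated).
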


\begin{proof}
Suppose $f(k_n)=z_{m_n}$ for $m_1<m_2<...$. Since $K$ is compact, we may replace $k_n$ with a convergent subsequence. Hence $k_n\to k$ in $K$ where we must have $f(k)=z$. Consider the neighborhood $U$ of $z$ as in the statement of the lemma. Find a path-connected neighborhood $V$ of $k$ in $K$ such that $f(V)\subseteq U$. Now infinitely many $k_n$ lie in $V$ and so infinitely many $z_{m_n}$ must lie in the same path component of $U$ as $z$; a contradiction.
\end{proof}

Certainly, $A$ enjoys the hypothesis of the previous lemma as applied to the convergent sequence $e_m\to e_{\infty}$. We now show that similar results exist for convergent sequences of maps.

\begin{lemma}\label{boundinglemmaforpaths}
Suppose $\alpha_n:[0,1]\to A$ is a sequence of maps such that $|\{e_1,e_2,...\}\cap \alpha_n([0,1])|\to \infty$. Then there is no map $\alpha:[0,1]\to A$ for which $\alpha_n\to \alpha$ uniformly.
\end{lemma}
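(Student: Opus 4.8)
The plan is to argue by contradiction: assume $\alpha_n\to\alpha$ uniformly for some map $\alpha:[0,1]\to A$, and produce a contradiction with $|\{e_1,e_2,\dots\}\cap\alpha_n([0,1])|\to\infty$. The structural fact about $A$ that I would exploit is that the disk $D$ and the circles $C_m$ meet one another \emph{only} at $\bfz$ (a circle through the origin with center $(a,0)$ and radius $a$ satisfies $x^2+y^2=2ax$, and two of these, or one of them together with $D=\{x^2+y^2\le x\}$, intersect only where $x=0$). Since $x>0$ away from $\bfz$, the function $g(x,y)=\tfrac{x^2+y^2}{x}$ is continuous on $A\setminus\{\bfz\}$; it is the constant $\tfrac{m+1}{m}$ on $C_m\setminus\{\bfz\}$ and takes values in $(0,1]$ on $D\setminus\{\bfz\}$. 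As the numbers $\tfrac{m+1}{m}$ are isolated from one another and from $(0,1]$, any connected subset of $A\setminus\{\bfz\}$ lies entirely inside $D\setminus\{\bfz\}$ or entirely inside a single $C_m\setminus\{\bfz\}$ (these ``petals'' are the connected components of $A\setminus\{\bfz\}$). Observe also that $e_m\notin D$, whereas $e_\infty\in C_\infty\subseteq D$.

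First I would apply Lemma~\ref{finitelymanlemma} to $\alpha:[0,1]\to A$ with the sequence $e_m\to e_\infty$. Taking $U=A\cap B(e_\infty,\tfrac14)$ (an open neighbourhood of $e_\infty$ missing $\bfz$), the path component of $e_\infty$ in $U$ is connected and contains $e_\infty$ with $g(e_\infty)=1$, so by the dichotomy above it is contained in $g^{-1}((0,1])\cap U\subseteq D$ and hence meets no $e_m$. Lemma~\ref{finitelymanlemma} then yields an integer $M$ with $e_m\notin\alpha([0,1])$ for all $m>M$.

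Next I would localize where $\alpha_n$ can meet the tail points. The set $\alpha^{-1}(e_\infty)$ is compact; for each of its points $s$ pick an open interval $J_s\ni s$ with $\alpha(J_s)\subseteq U$ and then a closed $J_s'$ with $s\in\operatorname{int}J_s'\subseteq J_s'\subseteq J_s$; since $\alpha(J_s')$ is connected and contains $e_\infty$, the same dichotomy forces $\alpha(J_s')\subseteq D$. Finitely many interiors $\operatorname{int}J_1',\dots,\operatorname{int}J_k'$ cover $\alpha^{-1}(e_\infty)$; set $K^\ast=[0,1]\setminus\bigcup_i\operatorname{int}J_i'$. Then $\alpha(K^\ast)$ is compact and disjoint from the compact set $\{e_\infty\}\cup\{e_m:m>M\}$, so they lie a positive distance $\varepsilon^\ast$ apart. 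Fix $n$ with $\|\alpha_n-\alpha\|_\infty<\min(\tfrac14,\varepsilon^\ast)$. On $K^\ast$ the path $\alpha_n$ stays within $\varepsilon^\ast$ of $\alpha(K^\ast)$, hence meets no $e_m$ with $m>M$ there; and on each $J_i'$ the path $\alpha_n$ stays within $\tfrac14$ of $D\cap B(e_\infty,\tfrac14)$, hence lands in $B(e_\infty,\tfrac12)\cap A$, a set avoiding $\bfz$, so the connected set $\alpha_n(J_i')$ lies in $D\setminus\{\bfz\}$ or in a single $C_m\setminus\{\bfz\}$ and therefore contains at most one of the points $e_m$. Consequently $\alpha_n([0,1])$ contains at most $k$ of the points $\{e_m:m>M\}$, so $|\{e_1,e_2,\dots\}\cap\alpha_n([0,1])|\le k+M$ for all large $n$, contradicting the hypothesis.

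The main obstacle is the case $e_\infty\in\alpha([0,1])$: then one cannot separate the compact set $\alpha([0,1])$ from the tail $\{e_m:m>M\}$ as a whole, since the $e_m$ accumulate at $e_\infty$, and it is exactly the finite cover of $\alpha^{-1}(e_\infty)$ together with the observation that a path confined to a small ball about $e_\infty$ is trapped in one petal that carries the count through. Everything else — the coordinate computations, the continuity of $g$, and the compactness arguments — is routine.
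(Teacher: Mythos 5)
Your argument is correct, but it takes a genuinely different route from the paper's. The paper never examines the limit path $\alpha$ until the final step: it extracts from each $\alpha_n$ a family of $n$ pairwise disjoint components $(a_k,b_k)$ of $\alpha_n^{-1}(A\setminus\{\bfz\})$, one for each petal-tip $e_{m_k}$ visited, observes that each such component contains two parameters whose images ($\bfz$ and $e_{m_k}$) are at distance $>1$, and then contradicts the uniform equicontinuity of a uniformly convergent sequence, since one of the $n$ disjoint intervals must have length $<1/n$. You instead analyze the limit directly: Lemma~\ref{finitelymanlemma} bounds which $e_m$ the limit $\alpha$ can hit, and a finite cover of $\alpha^{-1}(e_\infty)$ by intervals mapped near $e_\infty$, combined with the observation that a connected subset of $A\setminus\{\bfz\}$ lies in a single petal, traps each nearby $\alpha_n$ so that it meets at most $M+k$ of the points $e_m$. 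Both proofs rest on the same structural fact --- the petals are the path components of $A\setminus\{\bfz\}$ and each contains at most one $e_m$ --- but the paper's is shorter, while yours yields slightly more: a uniform bound on $|\{e_1,e_2,\dots\}\cap\beta([0,1])|$ for every $\beta$ in a sup-metric ball about a fixed $\alpha$, and, since it uses only compactness and local connectedness of the domain, it would prove Lemma~\ref{boundinglemma} directly without the space-filling-curve reduction. One phrasing quibble: the values $\tfrac{m+1}{m}$ are not isolated ``from $(0,1]$'' as a set, since they accumulate at $1\in(0,1]$; what your dichotomy actually needs, and what is true, is that each individual $\tfrac{m+1}{m}$ is an isolated point of $(0,1]\cup\{\tfrac{m'+1}{m'}: m'\geq 1\}$, so that a nondegenerate interval contained in this union cannot contain it.
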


\begin{proof}
Suppose $\alpha_n$ satisfies the hypothesis of the lemma. Without loss of generality, we may assume $|\{e_1,e_2,...\}\cap \alpha_n([0,1])|\geq n$. The open set $U=A\backslash \bfz$ has open path components $D\backslash \bfz$ and $C_m\backslash \bfz$, $m\geq 1$. For the moment, fix $n\geq 1$. The open set $\alpha_{n}^{-1}(U)$ is a countable disjoint union of open intervals. By assumption, we may find components $(a_1,b_1)$,...,$(a_n,b_n)$ of $\alpha_{n}^{-1}(U)$, naturally ordered within $\ui$, and points $t_{2k}\in (a_k,b_k)$ such that $\alpha(t_{2k})=e_{m_k}$ for $n$-distinct integers $m_1,m_2,...,m_n$. Let $t_{2k-1}=a_{k}$ for $k=1,...,n$ and $t_{2n+1}=b_n$. Now we have $t_0<t_1<t_2<...<t_{2n+1}$ where $f(t_{2k-1})=\bfz$ and $\alpha(t_{2k})=e_{m_k}$. In particular, $\|\alpha_n(t_j)-\alpha_n(t_{j+1})\|>1$ for all $n\geq 1$, $j=0,1,...,2n$.

Suppose, to obtain a contradiction, that $\alpha_n\to \alpha$ uniformly for some $\alpha:[0,1]\to X$. Since the set $\{\alpha_n|n\geq 1\}$ is uniformly equicontinuous, there is a $\delta>0$ such that $\|\alpha_n(s)-\alpha_n(t)\|<1$ for all $n\geq 1$ and $s,t\in \ui$ with $|s-t|<\delta$. Find $n$ such that $\frac{1}{n}<\delta$ and recall the previous paragraph applied to $\alpha_n$. Since the intervals $(a_k,b_k)$, $k=1,...,n$ are disjoint, at least one of them, call it $(a_{k_0},b_{k_0})$, must have diameter less than $\frac{1}{n}$. Since $t_{2k_0}\in (a_{k_0},b_{k_0})= (t_{2k_0-1},b_{k_0})$, we have $|t_{2k_{0}-1}-t_{2k_{0}}|<\frac{1}{n}<\delta$ and thus $\|\alpha_n(t_{2k_{0}-1})-\alpha_n(t_{2k_{0}})\|<1$; a contradiction of the last sentence in the previous paragraph.
\end{proof}

\begin{lemma}\label{boundinglemma}
Suppose $K$ is a Peano continuum and $f_n:K\to A$ is a sequence of maps such that $|\{e_1,e_2,...\}\cap f_n(K)|\to \infty$. Then there is no map $f:K\to A$ for which $f_n\to f$ uniformly.
\end{lemma}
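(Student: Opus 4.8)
The plan is to reduce the statement to the one-dimensional case already recorded in Lemma~\ref{boundinglemmaforpaths}. The key tool is the Hahn--Mazurkiewicz theorem: a metrizable space is a continuous image of $\ui$ if and only if it is a Peano continuum. So first I would fix a continuous surjection $\sigma\colon\ui\to K$.

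Next, suppose toward a contradiction that $f_n\to f$ uniformly for some map $f\colon K\to A$, where distances in $A$ are measured with the Euclidean metric inherited from $\bbr^2$. Define paths $\alpha_n=f_n\circ\sigma\colon\ui\to A$ and $\alpha=f\circ\sigma\colon\ui\to A$. Since precomposition with the fixed map $\sigma$ cannot increase the supremum distance, $\sup_{t\in\ui}\|\alpha_n(t)-\alpha(t)\|\leq\sup_{y\in K}\|f_n(y)-f(y)\|\to 0$; that is, $\alpha_n\to\alpha$ uniformly.

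It remains to check that the sequence $\alpha_n$ still satisfies the hypothesis on the earring points $e_m$. Because $\sigma$ is onto, $\alpha_n(\ui)=f_n(\sigma(\ui))=f_n(K)$, so $|\{e_1,e_2,...\}\cap\alpha_n(\ui)|=|\{e_1,e_2,...\}\cap f_n(K)|\to\infty$ by hypothesis. Thus $\alpha_n$ meets all the requirements of Lemma~\ref{boundinglemmaforpaths}, and yet it converges uniformly to the map $\alpha$; this contradiction completes the argument.

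I do not anticipate a genuine obstacle here: once the Hahn--Mazurkiewicz surjection is in hand, the only things to verify are the essentially trivial facts that precomposition preserves uniform convergence and that surjectivity of $\sigma$ identifies $\alpha_n(\ui)$ with $f_n(K)$, so the ``hard part'' is really just invoking Hahn--Mazurkiewicz correctly. If one prefers to avoid quoting that theorem, an alternative is to use that a Peano continuum is arcwise connected together with a compactness/equicontinuity argument run directly in $K$, paralleling the proof of Lemma~\ref{boundinglemmaforpaths}; but routing through Hahn--Mazurkiewicz keeps the proof short.
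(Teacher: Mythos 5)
Your proposal is correct and is essentially the paper's own proof: both use the Hahn--Mazurkiewicz theorem to obtain a surjection $\ui\to K$, compose with it to preserve uniform convergence and the image condition, and then invoke Lemma~\ref{boundinglemmaforpaths}. No issues.
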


\begin{proof}
Suppose $f_n$ satisfies the hypothesis of the lemma. Since $K$ is a Peano continuum, there is an onto map $\beta:[0,1]\to K$. Let $\alpha_n=f_n\circ \beta$. If $f_n\to f$ uniformly, then $\alpha_n\to\alpha=f\circ \beta$ uniformly and $|\{e_1,e_2,...\}\cap \alpha_n([0,1])|\to \infty$; a contradiction of Lemma \ref{boundinglemmaforpaths}.
\end{proof}

\begin{proposition}\label{atopgrp}
$\pi_1(A)$ is freely generated by the set $\{x_1,x_2,...\}$ and is a Hausdorff topological group.
\end{proposition}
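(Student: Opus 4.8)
The plan is to separate the algebraic and the topological assertions, and to build everything out of one family of retractions.

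\textit{Free generation.} For the underlying group I would pass to the locally path-connected coreflection: by the Remark, $lpc(A)$ is homotopy equivalent to a wedge of circles and $X\mapsto lpc(X)$ preserves fundamental groups, so $\pi_1(A)\cong\pi_1(lpc(A))$ is free. To see that $\{x_1,x_2,\dots\}$ is a basis I would use, for each finite $F\subseteq\{1,2,\dots\}$, the retraction $\sigma_F\colon A\to\bigvee_{m\in F}C_m$ that is the identity on $\bigcup_{m\in F}C_m$ and collapses $D\cup\bigcup_{m\notin F}C_m$ to $\bfz$. The key observation is that $\sigma_F$ is continuous: since $D$ (hence $C_\infty$) is collapsed \emph{together with} all $C_m$ for $m\notin F$, and $C_m\to C_\infty\subseteq D$ in the Hausdorff metric, there is no discontinuity along $C_\infty$ --- this is precisely where the naive retraction onto $D\cup\bigcup_{m\in F}C_m$ would fail. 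On $\pi_1$, $(\sigma_F)_*$ is the projection of free groups killing the generators outside $F$, it sends $x_m$ to a generator for $m\in F$, and since every nontrivial reduced word involves only finitely many generators, $\bigcap_{F}\ker(\sigma_F)_*$ is trivial; this pins down the basis.

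\textit{Hausdorffness.} Each $\bigvee_{m\in F}C_m$ is a finite graph, hence a CW-complex, so $\piqtop\big(\bigvee_{m\in F}C_m\big)$ is discrete \cite{CMdiscrete,GHMMthg} and $(\sigma_F)_*\colon\pi_1(A)\to\pi_1\big(\bigvee_{m\in F}C_m\big)$ is a continuous homomorphism to a discrete group. Assembling these over all finite $F$ gives a continuous injective homomorphism of $\pi_1(A)$ into a product of discrete groups, which is a Hausdorff topological group; a continuous injection into a Hausdorff space already forces $\pi_1(A)$ to be Hausdorff. (As a byproduct, this continuous injection will be used to read off invariants of convergent sequences below.)

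\textit{Continuity of multiplication --- the main obstacle.} Since $\pi_1(A)$ is a quasitopological group, translations are homeomorphisms, and a routine argument reduces joint continuity of multiplication to producing, for each open $W\ni 1$, an open $V\ni 1$ with $V\cdot V\subseteq W$. This is where Lemmas \ref{finitelymanlemma}--\ref{boundinglemma} enter. They force convergent sequences in $\Omega(A)$ to be tame: a uniformly convergent sequence of loops meets only finitely many of the points $e_m$, and as the index of a circle $C_m$ used by such a sequence tends to $\infty$ the relevant arcs limit onto $C_\infty$, which is null-homotopic because $\ell_m\to\ell_\infty$ with $[\ell_\infty]=1$. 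So I would argue that a convergent sequence in $\pi_1(A)$ (or in $\pi_1(A)\times\pi_1(A)$) can, after passing to a subsequence, be lifted to a uniformly convergent sequence of loops (resp. pairs of loops), the high-index circles contributing only factors $x_m\to 1$ that disappear in the limit; feeding such lifts through the continuous concatenation map $\Omega(A)\times\Omega(A)\to\Omega(A)$ followed by the quotient map then shows that $q\times q$ behaves like a quotient map along convergent sequences, and hence that multiplication is continuous. I expect the delicate point to be making the lifting precise and checking that $\pi_1(A)$ and its square are sequential enough for this to suffice; it is plausible that this step ultimately amounts to identifying the quotient topology on $\pi_1(A)$ with that of a free topological group on the convergent sequence $\{1,x_1,x_2,\dots\}$, which would make the topological-group property automatic.
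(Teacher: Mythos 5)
Your argument for free generation and for Hausdorffness is essentially sound, and the Hausdorff part (a continuous injection of $\pi_1(A)$ into a product of the discrete groups $\pi_1(\bigvee_{m\in F}C_m)$ via the retractions $\sigma_F$) is a legitimate, more self-contained alternative to the paper's appeal to planarity; your $\sigma_F$ is indeed continuous precisely because $D$ is collapsed along with the complementary circles. Two caveats, though. The $\sigma_F$'s only give the \emph{injectivity} half of the basis claim (no nontrivial reduced word in the $x_m$ dies); for \emph{generation} you still need that every loop in $A$ is homotopic into a finite stage, which does not follow from ``$\pi_1(lpc(A))$ is free'' alone --- the paper gets it from Lemma \ref{finitelymanlemma} applied to $e_m\to e_\infty$ together with the deformation retraction of $A\setminus\{e_{M+1},e_{M+2},\dots\}$ onto $A_M$. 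This finite-stage fact is also what your injection into the product needs in order to be injective on all of $\pi_1(A)$ rather than just on the subgroup generated by the $x_m$.

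The genuine gap is the continuity of multiplication. Your reduction to finding $V\ni 1$ with $V\cdot V\subseteq W$ is valid in a quasitopological group, but the proposed way of producing $V$ does not work as stated. First, the claim that a convergent sequence in $\pi_1(A)$ can be lifted through $q:\Omega(A)\to\pi_1(A)$ to a uniformly convergent sequence of loops is a strong ``sequence-covering'' property that quotient maps do not have in general and that is not established here. Second, even granting such lifts, you would only obtain \emph{sequential} continuity of multiplication, and $\pi_1(A)\times\pi_1(A)$ need not be sequential, so this does not yield continuity; you flag this yourself but do not close it. Third, and most tellingly, the outline as written would apply essentially verbatim to the Hawaiian earring $\bbh$, whose convergent sequences of loops are equally tame and whose generators also converge to $1$, yet $\pi_1(\bbh)$ is \emph{not} a topological group by Fabel's theorem cited in this paper. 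A correct argument must therefore isolate what distinguishes $A$ from $\bbh$: by Lemma \ref{finitelymanlemma}, loops in $A$ traverse only finitely many $C_m$, so $\pi_1(A)$ is a genuinely free group which one then wants to identify topologically with the free topological group on the convergent sequence $\{x_1,x_2,\dots\}\cup\{1\}$. Your last sentence correctly guesses this identification, but it is the entire content of the claim: the paper obtains it by citing the computation of $\pi_1(C_\infty\cup\bigcup_m C_m)$ as a free Markov topological group \cite[Example 4.24]{Br10.1} together with the fact that attaching a $2$-cell induces a topological quotient map on $\pi_1$ \cite[Lemma 4.4]{Brazasdiss}, whence $\pi_1(A)$ is a quotient topological group and hence a topological group. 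Without that input, or an equivalent substitute, the topological-group assertion remains unproven.
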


\begin{proof}
Note that $A_k=D\cup \bigcup_{1\leq m\leq k}C_m$ is a retract of $A$ by maps $\rho_k:A\to A_k$ collapsing $C_{k+1},C_{k+2},...$ homeomorphically onto $C_{\infty}$. The space $A_k$ is homotopy equivalent to the wedge of circles $\bigcup_{1\leq m\leq k}C_m$ so $\pi_1(A_k)$ is freely generated by the set $\{x_1,...,x_k\}$. The inclusions $A_1\to A_2\to...$ induce a directed system of discrete free groups $\pi_1(A_k)=F(x_1,...,x_k)$ such that $\varinjlim_{k}\pi_1(A_k)=\varinjlim_{k}F(x_1,...,x_k)=F(x_1,x_2,...)$. Moreover, the inclusions $A_n\to A$ induce a canonical homomorphism  $\phi:\varinjlim_{k}\pi_1(A_k)\to \pi_1(A)$. We check that $\phi$ is a group isomorphism. If $\alpha:S^1\to A$ is a loop, then by Lemma \ref{finitelymanlemma}, $\alpha$ has image in $A\backslash \{e_{M+1},e_{M+2},...\}$ for some $M$. Since $A\backslash \{e_{M+1},e_{M+2},...\}$ deformation retracts onto $A_M$, $\alpha$ is homotopic to a loop in $A_M$. This verifies the surjectivity of $\phi$. Suppose $\alpha:S^1\to A_k$ is a loop which contracts by a null-homotopy $h:S^1\times [0,1]\to A$. Once again applying Lemma \ref{finitelymanlemma}, $h$ must have image in $A\backslash \{e_{M+1},e_{M+2},...\}$ for some $M>k$. Composing $h$ with a deformation retract of $A\backslash \{e_{M+1},e_{M+2},...\}$ onto $A_M$, we obtain a null-homotopy of $\alpha$ within $A_M$. Thus $\alpha$ represents the trivial element in $\varinjlim_{k}\pi_1(A_k)$, proving injectivity.

A proof from first principles that $\pi_1(A)$ is a Hausdorff topological group is much more technical; we discuss the ingredients of the proof here, referring to more general work. Since $A$ is a planar set, $\pi_1(A)$ is Hausdorff according to \cite[Corollary 20]{BFqtop}. To see that $\pi_1(A)$ is a topological group, we call upon results in \cite{Br10.1}. The space $C_{\infty}\cup \bigcup_{n}C_n$ is precisely the space in \cite[Example 4.24]{Br10.1} whose fundamental group is shown to be the free Markov topological group generated by the subspace $\{x_1,x_2,...,x_{\infty}\}\subset \pi_1(A)$. It is known that if a 2-cell $e^2$ is attached to a path-connected space $Y$, then the inclusion $Y\to Y\cup e^2$ induces a homomorphism $\pi_1(Y)\to \pi_1(Y\cup e^2)$ which is a topological quotient map \cite[Lemma 4.4]{Brazasdiss}. Since $A$ is obtained from $C_{\infty}\cup \bigcup_{m}C_m$ by attaching a 2-cell with boundary $C_{\infty}$, $\pi_1(A)$ is the topological quotient group of the topological group $\pi_1(C_{\infty}\cup \bigcup_{m}C_m)$ by factoring the normal closure of $\{[\ell_{\infty}]\}$. Since the quotient of a topological group equipped with the quotient topology is a topological group, $\pi_1(A)$ is a topological group. 
\end{proof}

\begin{remark}
One can conclude from the previous proof that $\pi_1(A)$ is isomorphic to the free Graev topological group generated by the space $\{x_1,x_2,...,x_{\infty}\}$ with basepoint $x_{\infty}$.
\end{remark}

\section{The space $\bbh_n$}

Fix $n\geq 2$ and let $S_m\subseteq \bbr^{n+1}$ be the n-dimensional sphere centered at $(1/m,0,...,0)$ with radius $1/m$. Now $\bbh_{n}=\bigcup_{m\geq 1}S_m$ is the \textit{n-dimensional Hawaiian earring space} with basepoint $\bfz=(0,0,...,0)$. Let $y_m\in \pi_n(\bbh_n)$ be the homotopy class of the canonical homeomorphism $\gamma_m:S^n\to S_m\subset\bbh_n$ that generates $\pi_n(S_m)\cong \bbz$. 
\begin{figure}[H]
\centering \includegraphics[height=2.3in]{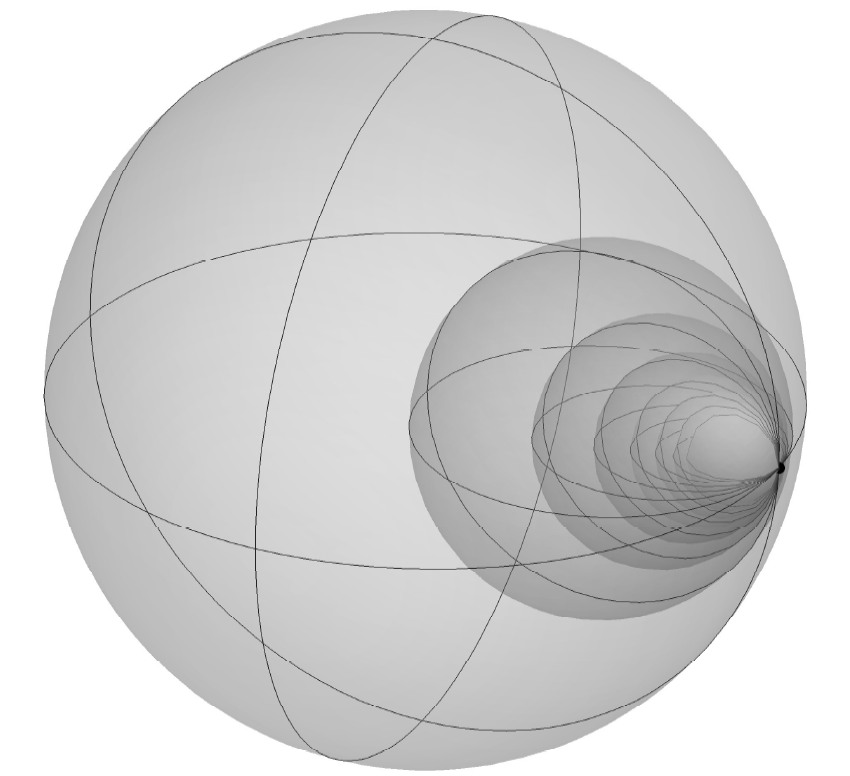}
\caption{\label{spacehe}The 2-dimensional Hawaiian earring $\bbh_2\subseteq \bbr^3$.}
\end{figure}

We note some technical achievements regarding $\bbh_n$:
\begin{enumerate}
\item \cite{EKndimhe} $\bbh_n$ is $(n-1)$-connected.
\item \cite{EKndimhe} If $r_M:\bbh_n\to S_M$ is the natural retraction collapsing $\bigcup_{m\neq M}S_m$ to $\bfz$, then the induced homomorphism $\pi_n(\bbh_n)\to \prod_{m\geq 1}\pi_n(S_m)$, $[\alpha]\mapsto ((r_1)_{\#}([\alpha]),(r_2)_{\#}([\alpha]),...)$ is an isomorphism. In particular, if $\gamma:S^n\to \bbh_n$ is a map such that $r_m\circ \gamma$ is null-homotopic for all $m$, then $\gamma$ is null-homotopic. Thus $\pi_n(\bbh_n)\cong \prod_{m\geq 1}\bbz$.
\item \cite{GHMMnhawaiian} The group isomorphism $\pi_n(\bbh_n)\to \prod_{m\geq 1}\bbz$ is an isomorphism of topological groups when $ \prod_{m\geq 1}\bbz$ is viewed as the the direct product of discrete groups.
\end{enumerate}

Although not explicitly stated in \cite{EKndimhe}, it follows that $\pi_n(\bbh_n)$ is isomorphic to the n-th singular homology group $H_n(\bbh_n)$. In particular, since $\bbh_n$ is $(n-1)$-connected, the Hurewicz Theorem implies that the Hurewicz homomorphism $h:\pi_n(\bbh_n)\to H_n(\bbh_n)$ is an isomorphism. One might initially be concerned about the applicability of the Hurewicz Theorem to spaces that are not CW-complexes, however, the theory of CW-approximations and weak homotopy equivalences (see p.352-357 of \cite{Hatcher}) permits the application.

\begin{remark}\label{finitecaseremark}
If $Y=\bigvee_{i=1}^{m}S^1\vee \bigvee_{j=1}^{k}S^n$ is a finite wedge of $1$-spheres and $n$-spheres, then the universal covering space $\wt{Y}$ may be viewed as the Caley graph $T$ with vertex set $F(x_1,...,x_m)$ with a copy of $\bigvee_{j=1}^{k}S^n$ attached at each vertex. By collapsing the tree to a point, one observes that $\pi_n(Y)\cong \pi_n(\wt{Y})\cong \bigoplus_{w\in F(x_1,...,x_m)}\pi_n(\bigvee_{j=1}^{k}S^n)\cong \bigoplus_{\bbn}\bbz$. In the following lemma, we seek to obtain an analogous result when we replace $\bigvee_{j=1}^{k}S^n$ with the infinite shrinking wedge $\bbh_n$, however, care must be taken since collapsing edges will not necessarily preserve homotopy type.
\end{remark}

\begin{lemma}\label{treecollapse}
Let $T$ be a finite tree. For each vertex $v$ of $T$, attach a copy of $\bbh_n$ to $T$ by the identification $v\sim \bfz$. Denote the resulting space $Z$. Then $Z$ is $(n-1)$-connected and the quotient map $\eta:Z\to Z/T$ induces a continuous group isomorphism $\eta_{\#}:\pi_n(Z)\to \pi_n(Z/T)$. In particular, $\pi_n(Z)\cong \prod_{v}\prod_{n\geq 1}\bbz\cong \prod_{\bbn}\bbz$.
\end{lemma}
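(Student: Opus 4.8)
The plan is to establish the three claims in sequence: $(n-1)$-connectedness of $Z$, the fact that $\eta_\#$ is a continuous isomorphism, and the resulting product description. The continuity of $\eta_\#$ is automatic from functoriality of $\pi_n$, so the real content is bijectivity of $\eta_\#$ together with the identification of the target group.

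First I would handle $(n-1)$-connectedness. Since $T$ is a finite tree, $Z$ deformation retracts — after choosing a spanning structure — onto a space homotopy equivalent to a wedge: $T$ is contractible, so intuitively $Z$ should be $(n-1)$-connected because each $\bbh_n$ is $(n-1)$-connected by item (1) in the $\bbh_n$ section and $T$ is simply connected. To make this rigorous without invoking that collapsing $T$ preserves homotopy type (which the remark warns against), I would argue directly: any map $S^k\to Z$ with $k\le n-1$ has compact image meeting only finitely many spheres $S_m$ across the finitely many attached copies of $\bbh_n$ (using a Lemma \ref{finitelymanlemma}-type argument applied to each copy of $\bbh_n$, since each copy fails local path-connectedness only at its wedge point and the relevant points $e_m$-analogues accumulate at $\bfz$), so the image lies in a subcomplex $Z'$ which is a finite wedge of spheres of dimension $\ge n$ joined along a tree; such $Z'$ is homotopy equivalent to a finite wedge $\bigvee S^n$ and hence $(n-1)$-connected, so the map is null-homotopic in $Z'\subseteq Z$.

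Next, for $\eta_\#$: I would apply the Hurewicz theorem to both $Z$ and $Z/T$. Both are $(n-1)$-connected (for $Z/T$, collapsing a contractible-enough piece — though $T\to\ast$ is a cofibration since $T$ is a finite CW-pair, so $Z\to Z/T$ is a based homotopy equivalence in the CW sense after CW-approximation, or one argues $(n-1)$-connectedness of $Z/T$ directly as above). Via CW-approximation and weak homotopy equivalences (as the excerpt already licenses on p.352--357 of \cite{Hatcher}), the Hurewicz homomorphisms $\pi_n(Z)\to H_n(Z)$ and $\pi_n(Z/T)\to H_n(Z/T)$ are isomorphisms, and they are natural, so it suffices to show $\eta_*:H_n(Z)\to H_n(Z/T)$ is an isomorphism. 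This follows from the long exact sequence of the pair $(Z,T)$ together with $H_k(T)=0$ for $k\ge 1$ ($T$ is contractible) and the excision/good-pair identification $H_k(Z,T)\cong \tilde H_k(Z/T)$, valid because $(Z,T)$ is a good pair ($T$ is a neighborhood deformation retract in $Z$ — a finite tree has a mapping-cylinder neighborhood). Hence $\tilde H_n(Z)\cong \tilde H_n(Z/T)$ via $\eta_*$, giving bijectivity of $\eta_\#$.

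Finally, to compute the group: $Z/T$ is the wedge $\bigvee_{v}\bbh_n$ over the vertices $v$ of $T$, since collapsing $T$ to a point identifies all the wedge points $\bfz$ of the various copies. For a wedge of $(n-1)$-connected spaces one has $\pi_n\big(\bigvee_v \bbh_n\big)\cong \bigoplus_v \pi_n(\bbh_n)$ — but here I must be careful about topology, not just the group: I would instead compute $\tilde H_n(Z/T)\cong \bigoplus_v \tilde H_n(\bbh_n)\cong \bigoplus_v \prod_{m\ge 1}\bbz$ using $H_n(\bbh_n)\cong\prod_{m\ge1}\bbz$ (from items (1)--(2) plus Hurewicz as already noted in the excerpt), and since $T$ has finitely many vertices this finite direct sum of countable products is again $\prod_{\bbn}\bbz$ as an abstract group. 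For the topological refinement, the natural map $\pi_n(Z/T)\to \prod_v\prod_m\pi_n(S_m)$ obtained by composing with all retractions is continuous, and the argument of item (3) from \cite{GHMMnhawaiian} — a Peano-continuum / Lemma \ref{boundinglemma}-style bounding argument showing any convergent sequence of based maps $S^n\to Z/T$ hits only boundedly many spheres — shows the inverse is continuous as well, yielding the isomorphism of topological groups $\pi_n(Z)\cong\prod_{\bbn}\bbz$ with the product-of-discretes topology.

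\textbf{Main obstacle.} The hardest part is the topological (not merely algebraic) identification in the last step: one must verify that no new convergent sequences appear in $\pi_n(Z/T)$ beyond those forced by the finitely many copies of $\bbh_n$, i.e.\ that the finite wedge construction over $T$ does not destroy the ``product of discretes'' topology. This requires adapting the compactness/equicontinuity bounding arguments (in the spirit of Lemmas \ref{boundinglemmaforpaths}--\ref{boundinglemma}) to maps out of $S^n$ landing in $Z/T$, controlling how a convergent sequence of spheroids distributes its image among the summands.
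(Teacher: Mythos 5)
There are two genuine gaps, both occurring at exactly the places where $\bbh_n$ fails to behave like a CW wedge. First, your argument for $(n-1)$-connectedness is based on the claim that a map from a compact space into $Z$ meets only finitely many of the spheres $S_m$, justified by a ``Lemma \ref{finitelymanlemma}-type argument.'' This fails: unlike the space $A$, the space $\bbh_n$ is locally path-connected everywhere (it is a Peano continuum; every neighborhood of $\bfz$ contains all but finitely many spheres $S_m$ \emph{entirely}, together with their path to $\bfz$), so the hypothesis of Lemma \ref{finitelymanlemma} is never satisfied at the wedge point, and indeed a single path in $\bbh_n$ can pass through every sphere. So the image of $S^k\to Z$ need not lie in a finite subcomplex, and your reduction collapses. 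The $(n-1)$-connectedness of $\bbh_n$ itself is the nontrivial cited theorem of Eda--Kawamura \cite{EKndimhe}; the paper extends it to $Z$ by induction on edges, writing $Z$ as a one-point union of two $(n-1)$-connected pieces glued at an \emph{interior point of an edge}, where both pieces are locally contractible.

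Second, your main step --- the long exact sequence of $(Z,T)$ together with $H_*(Z,T)\cong \tilde H_*(Z/T)$ --- requires $(Z,T)$ to be a good pair, and it is not: every neighborhood $U$ of $T$ in $Z$ contains infinitely many whole spheres $S_m$ from each attached copy of $\bbh_n$ (and retracts onto each such $S_m$), so $H_n(U)\neq 0=H_n(T)$ and no neighborhood of $T$ deformation retracts onto $T$; in particular $T$ has no mapping-cylinder neighborhood and $\{\bfz\}\hookrightarrow\bbh_n$ is not a cofibration. The same defect undermines your wedge-axiom computation $\tilde H_n(\bigvee_v\bbh_n)\cong\bigoplus_v\tilde H_n(\bbh_n)$ as you justify it (note that the failure of the naive wedge computation is precisely why $H_n(\bbh_n)$ is $\prod_m\bbz$ rather than $\bigoplus_m\bbz$). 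This is the subtlety Remark \ref{finitecaseremark} warns about. The paper avoids excision entirely: it runs a Mayer--Vietoris argument with $U=K_0\cup[0,2/3)$ and $V=(1/3,1]\cup Z'$, whose intersection is an open arc and hence genuinely contractible, inducts on the edges of $T$, and then identifies $H_n(Z/T)$ by observing that a finite one-point union of copies of $\bbh_n$ is \emph{homeomorphic} to $\bbh_n$, whose homology and homotopy are known from \cite{EKndimhe,GHMMnhawaiian}. (That homeomorphism $Z/T\cong\bbh_n$ would also have made your final topological step immediate, rather than requiring a new bounding argument.) Your overall architecture --- Hurewicz plus a homological comparison of $Z$ and $Z/T$ --- matches the paper's, but the two steps above need to be replaced, not merely polished.
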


\begin{proof}
The continuity of $\eta_{\#}$ follows from functorality of $\pi_n$. Our proof of the other claims is carried out by induction on the number of edges of $T$. The lemma follows from the results on $\bbh_n$ above when $T$ has a single vertex.

Suppose the lemma holds for all trees with $n$ vertices. Suppose $T$ has $n+1$ vertices. Let $T'\subset T$ be a subtree with $n$ edges and $E$ be the remaining edge of $T$ so that $T'\cup E=T$. For convenience, identify $E$ with $[0,1]$ so that $\{1\}=T'\cap E$ and the vertex $1$ is an end. Let $K_1,...,K_p$ be the copies of $\bbh_n$ attached to the vertices of $T'$ so that $Z'=T'\cup K_1\cup...\cup K_p$. Let $K_0$ denote the copy of $\bbh_n$ attached at $0$. By the induction hypothesis $Z'$ is $(n-1)$-connected, the quotient map $\eta ':Z'\to Z'/T'$ induces an isomorphism $(\eta ')_{\#}:\pi_n(Z')\to \pi_n(Z'/T')$. By choosing a homeomorphism $Z'/T'\cong \bbh_n$, we may identify $\pi_n(Z'/T')$ and $ H_n(Z'/T')$ with $\prod_{m\geq 1}\bbz$.

Since $Z$ is the one-point union of $(n-1)$-connected spaces $K_0\cup [0,1/2]$ and $[1/2,1]\cup Z'$, which are both locally contractible at $1/2$, $Z$ is also $(n-1)$-connected. Since both $Z$ and $Z'/T'$ are $(n-1)$-connected, it now suffices (by the Hurewicz Theorem) to show that $\eta_{\ast}:H_n(Z)\to H_n(Z/T)$ is an isomorphism.

Set $U=K_0\cup [0,2/3)$ and $V=(1/3,1]\cup Z'$. These open sets deformation retract onto $K_0$ and $Z'$ respectively. Since $U\cap V$ is contractible, the Mayer-Vietoris sequence for the triad $(Z,U,V)$ implies that the inclusions $i:K_0\to Z$ and $j:Z'\to Z$ induce an isomorphism $\phi=i_{\ast}-j_{\ast}: H_n(K_0)\oplus H_n(Z')\to H_n(Z)$.

The inclusions $k:K_0\to Z/T$, $l:Z'/T'\to Z/T$ induce a homomorphism $k_{\ast}-l_{\ast}:H_n(K_0)\oplus H_n(Z'/T')\to H_n(Z/T)$, which makes the bottom square in following diagram commute:
\[\xymatrix{
H_n(Z) \ar[r]^{\eta_{\ast}}  &  H_n(Z/T)\\
 H_n(K_0)\oplus H_n(Z')  \ar[u]^-{i_{\ast}-j_{\ast}} \ar[r]_-{id\oplus(\eta ')_{\ast}} & H_n(K_0)\oplus H_n(Z'/T') \ar[u]_-{k_{\ast}-l_{\ast}} 
}\]
To ensure the top map is an isomorphism it suffices to show that $k_{\ast}-l_{\ast}$ is an isomorphism. However, by identifying $H_n(K_0)\oplus H_n(Z'/T')$ with $\prod_{m\geq 1}\bbz\oplus \prod_{m\geq 1}\bbz$ and $H_n(Z/T)$ with $\prod_{m\geq 1}\bbz$, we see that $k_{\ast}-l_{\ast}:\prod_{m\geq 1}\bbz\oplus \prod_{m\geq 1}\bbz\to \prod_{m\geq 1}\bbz$ is a coordinate shuffling isomorphism determined by a homeomorphism $Z/T\cong \bbh_n$.
\end{proof}

%
\begin{corollary}\label{infinitetreecorollary}
Let $T$ be a tree with vertex set $V$ and basepoint $v_0\in V$ and $Z$ be the space $T$ with a copy $K_v$ of $\bbh_n$ attached at each $v\in V$. If $A_v$ is the arc in $T$ from $v_0$ to $v$, then the inclusion maps $A_v\cup K_v\to Z$ induce a group isomorphism $\rhoup:\bigoplus_{v\in V}\pi_n(A_v\cup K_v)\to \pi_{n}(Z)$. In particular, $\pi_n(Z)\cong \bigoplus_{|V|}\prod_{\bbn}\bbz$ as a group.
\end{corollary}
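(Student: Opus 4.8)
The plan is to prove the isomorphism first when $V$ is finite (so $T$ is a finite tree) by induction on the number of edges, and then to obtain the general case as a direct limit over finite subtrees. For the finite case I would run the induction parallel to the inductive step in the proof of Lemma~\ref{treecollapse}, but taking care to keep track that the isomorphism obtained is the one \emph{induced by the inclusions} $A_v\cup K_v\hookrightarrow Z$ rather than some abstract isomorphism. Writing $T=T'\cup E$ with $E$ an edge having a free end $v_1\notin T'$ and $Z'=T'\cup\bigcup_{v}K_v$ (over the vertices of $T'$), I would split $Z$ along an interior point of $E$; the Mayer--Vietoris sequence for this decomposition has contractible overlap, so together with the Hurewicz theorem — available since every space in sight is $(n-1)$-connected and $n\geq 2$ — it shows that the inclusions $K_{v_1}\hookrightarrow Z$ and $Z'\hookrightarrow Z$ induce an isomorphism $\pi_n(K_{v_1})\oplus\pi_n(Z')\xrightarrow{\cong}\pi_n(Z)$. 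Since the arc $A_{v_1}$ meets $K_{v_1}$ only in its basepoint $v_1$, the space $A_{v_1}\cup K_{v_1}$ deformation retracts onto $K_{v_1}$ inside $Z$, so the inclusion-induced map of $A_{v_1}\cup K_{v_1}$ into $\pi_n(Z)$ agrees with the first summand; and since arcs in a tree are unique, for a vertex $v$ of $T'$ the arc $A_v$ computed in $T$ is the one computed in $T'$, so the induction hypothesis applies verbatim to the second summand. Combining these identifications yields the finite-tree case.

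To treat an arbitrary tree $T$, observe that the finite subtrees $F$ of $T$ containing $v_0$ are directed by inclusion with union $T$, and that the closed subspaces $Z_F=F\cup\bigcup_{v\in V(F)}K_v$ cover $Z$. The key step is to show that every compact $C\subseteq Z$ lies in some $Z_F$: the map $\pi\colon Z\to T$ collapsing each $K_v$ to the vertex $v$ is a continuous retraction (it is the identity on $T$ and constant on each $K_v$, hence descends from a continuous map on $T\sqcup\bigsqcup_v K_v$), so $\pi(C)$ is a compact subset of the tree $T$ and hence lies in a finite subtree $F\ni v_0$; as $\pi^{-1}(F)=Z_F$ this gives $C\subseteq Z_F$. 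It follows that every map $S^n\to Z$ and every based homotopy $S^n\times I\to Z$ has image in some $Z_F$, so the inclusions induce a group isomorphism $\varinjlim_F\pi_n(Z_F)\cong\pi_n(Z)$. Because $A_v$ does not depend on which finite subtree one computes it in, the finite-stage isomorphisms are compatible with the structure maps of the two directed systems, and passing to the colimit identifies $\rhoup$ with an isomorphism. Finally, each $A_v$ is an arc (or the single point $v_0$) attached to $K_v\cong\bbh_n$ at its basepoint, so $A_v\cup K_v$ deformation retracts onto $K_v$ and $\pi_n(A_v\cup K_v)\cong\pi_n(\bbh_n)\cong\prod_{m\geq 1}\bbz$; summing over $v$ gives $\pi_n(Z)\cong\bigoplus_{|V|}\prod_{\bbn}\bbz$.

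The main obstacle I anticipate is the interchange of $\pi_n$ with the directed union — i.e. verifying that $Z$ behaves homotopically like the colimit of the $Z_F$ — which hinges on the compactness argument through the retraction $\pi\colon Z\to T$ together with the fact that compact subsets of a (possibly non-locally-finite) tree lie in finite subtrees. A secondary point needing attention is the bookkeeping in the induction: one must be sure the finite-case isomorphism is at every stage the map induced by the inclusions $A_v\cup K_v\hookrightarrow Z$, so that the colimit really computes the map $\rhoup$ described in the statement.
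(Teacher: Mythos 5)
Your proposal is correct and follows essentially the same route as the paper: the finite case is the Mayer--Vietoris/Hurewicz induction of Lemma~\ref{treecollapse}, and the infinite case follows because every map $S^n\to Z$ and every null-homotopy has compact image lying in some $Z_F$ for a finite subtree $F$. Your extra bookkeeping (checking the finite-stage isomorphism is inclusion-induced, and justifying the compactness step via the retraction $Z\to T$) fills in details the paper leaves implicit but does not change the argument.
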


\begin{proof}
Lemma \ref{treecollapse} is precisely the case when $T$ is finite. When $T$ is infinite, $\rhoup$ is defined on elements as $\rhoup(\sum_{v}n_v[\delta_v])=\left[\prod_{n_v\neq 0}\delta_{v}^{n_v}\right]$ where all but finitely many $n_v\in\bbz$ are non-zero. Every map $S^n\to Z$ and null-homotopy of a map $S^n\to Z$ must have image in a subspace $Z_F\subset Z$ consisting of a finite subtree $F\subset T$ and all $K_v$ with $v\in F$. Hence the finite case directly implies the infinite case.
\end{proof}

\begin{remark}
It is well-known that the homomorphism $\eta_{\#}$ from Lemma \ref{treecollapse} fails to be surjective if we allow $n=1$.
\end{remark}

\section{The space $X=A\vee \bbh_n$}

We now fix $n\geq 1$ and consider the one-point union $X=A\vee \bbh_{n}$, which may be embedded as a compact subspace of $\bbr^{n+1}$. The main difficulty in what follows is ensuring that $\pi_1(X)$ and $\pi_n(X)$ are $T_1$.

\begin{figure}[H]
\centering \includegraphics[height=2in]{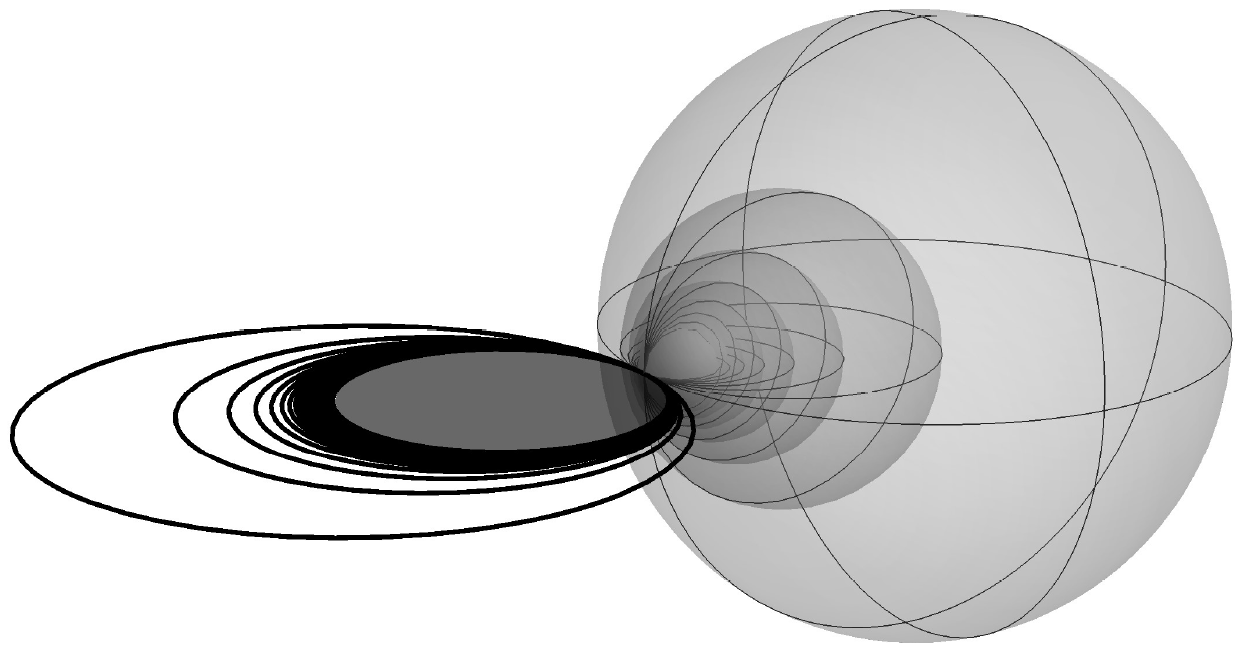}
\caption{\label{spacex}The space $X=A\vee \bbh_n$ for $n=2$.}
\end{figure}

\begin{proposition}
The inclusion $A\to X$ induces an isomorphism $\pi_1(A)\to \pi_1(X)$ of topological groups.
\end{proposition}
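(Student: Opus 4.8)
The plan is to produce a group isomorphism $\pi_1(A)\to\pi_1(X)$ induced by the inclusion $\iota\colon A\hookrightarrow X$ and then upgrade it to a homeomorphism by a soft argument. The retraction $\rho\colon X\to A$ collapsing $\bbh_n$ to $\bfz$ is continuous and satisfies $\rho\circ\iota=\mathrm{id}_A$, so by functoriality $\rho_\#\circ\iota_\#=\mathrm{id}$; hence $\iota_\#$ is a continuous injective homomorphism and $\rho_\#$ a continuous surjective one. Once $\iota_\#$ is known to be bijective, $\rho_\#$ is its two‑sided inverse, and being continuous it makes $\iota_\#$ a homeomorphism, hence an isomorphism of topological groups. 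Thus the whole statement reduces to showing $\iota_\#$ is onto, equivalently that $\ker\rho_\#$ is trivial: every loop in $X$ whose image under $\rho$ is nullhomotopic in $A$ is itself nullhomotopic in $X$.

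For the underlying group isomorphism I would argue as follows. The space $\bbh_n$ is $(n-1)$-connected, so in the relevant range it is simply connected, while $A$ is well‑pointed: I would verify that $\{\bfz\}\hookrightarrow A$ is a cofibration by exhibiting a deformation retraction of a small neighborhood of $\bfz$ in $A$ onto $\{\bfz\}$, using that near $\bfz$ the space $A$ is a cusp of the disk $D$ together with a null sequence of arcs of the circles $C_m$ accumulating onto an arc of $C_\infty$, all radially contractible toward $\bfz$. Since $\{\bfz\}\hookrightarrow A$ is a cofibration, the one‑point union $X=A\vee\bbh_n$ is a pushout along a cofibration, hence a homotopy pushout of $A\leftarrow\{\bfz\}\to\bbh_n$. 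A standard van Kampen argument—replace $X$ up to homotopy by the double mapping cylinder of $A\leftarrow\{\bfz\}\to\bbh_n$ (legitimate because $A$ is well‑pointed), and cover it by two open sets deformation retracting onto $A$ and onto $\bbh_n$ with contractible intersection—then yields $\pi_1(X)\cong\pi_1(A)\ast\pi_1(\bbh_n)=\pi_1(A)$, with the isomorphism induced by $\iota$. Combined with the first paragraph this completes the proof.

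The one step that is not purely formal is the verification that $A$ is well‑pointed, so that is where I expect the real (though routine) work to lie; everything hinges on being able to absorb the wild point $\bfz$ of $\bbh_n$ into the tame side $A$. If one prefers to avoid this verification, there is a parallel route mirroring the proof of Proposition \ref{atopgrp}: applying Lemma \ref{finitelymanlemma} to the sequence $e_m\to e_\infty$ inside $X$ shows that every loop $S^1\to X$ and every nullhomotopy $D^2\to X$ has image missing all but finitely many $e_m$, hence lies up to homotopy in $X_M:=A_M\vee\bbh_n$ for some $M$ (where $A_M=D\cup C_1\cup\dots\cup C_M$), so that $\pi_1(X)=\varinjlim_M\pi_1(X_M)$; one then computes $\pi_1(X_M)$ by passing to the universal cover of $A_M$—a Cayley tree of a finitely generated free group with a contractible disk attached at each vertex—enlarged by gluing a copy of $\bbh_n$ at each vertex, and observing that any loop in this cover is contained in a finite subcomplex, which is $(n-1)$-connected by the argument of Lemma \ref{treecollapse} and hence simply connected. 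Either way $\ker\rho_\#$ is trivial, and the topological refinement follows as in the first paragraph.
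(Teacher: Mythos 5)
Your proposal is correct and reaches the right conclusion, but it replaces the paper's key citation with a self-contained argument, so the routes are genuinely different. The paper disposes of the group-level isomorphism in one stroke by invoking Griffiths' theorem: since $\bbh_n$ is first countable and locally simply connected at $\bfz$, the two inclusions induce an isomorphism $\pi_1(A)\ast\pi_1(\bbh_n)\to\pi_1(X)$, and simple connectivity of $\bbh_n$ finishes the computation; the topological upgrade is then exactly your first paragraph (the retraction $X\to A$ exhibits $\iota_\#$ as a split continuous injection, hence a homeomorphism once bijective, with $\pi_1(X)$ inheriting the topological-group property from Proposition \ref{atopgrp}). You instead place the niceness hypothesis on the \emph{other} factor: you verify that $\{\bfz\}\hookrightarrow A$ is a cofibration, replace the wedge by the double mapping cylinder (legitimate because one leg of the span is a cofibration, so the pushout is a homotopy pushout), and run van Kampen there, where the two basepoints are separated by an arc and the open cover is unproblematic. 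This is sound, and it is the right thing to worry about: the naive van Kampen argument at a common wild point is exactly what fails in Griffiths' twin-cone example, and your cofibration hypothesis is what rules that pathology out here. What Griffiths buys the paper is brevity and the avoidance of any cofibration check; what your route buys is self-containedness. Your alternative route (direct limit over $X_M$ via Lemma \ref{finitelymanlemma}, plus the explicit universal cover of $X_M$ built from the Cayley tree) is also valid and is essentially the machinery the paper deploys later in Proposition \ref{largeimageprop}, so it meshes well with the rest of the argument.

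Two cautions. First, ``all radially contractible toward $\bfz$'' is not literally correct: the straight-line retraction to $\bfz$ leaves $A$ for points of $C_m\setminus D$, so one must slide points of each $C_m$ along that circle, and then choose the deformation of the $2$-cell $D$ so that on the arcs of $C_\infty=\partial D$ near $\bfz$ it agrees with the limit of those slidings (possible since $D$ is a disk, but this matching along $C_\infty$ is the one non-automatic point in the cofibration check, and it is where the real content of your proof lives). Second, both your argument and the paper's use $\pi_1(\bbh_n)=1$, i.e., $n\geq 2$; for $n=1$ the statement as written is false, a caveat the paper itself glosses over.
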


\begin{proof}
Since the space $\bbh_n$ is first countable and locally simply connected, it follows from a theorem of Griffiths \cite{Griffiths} that the inclusions $A\to X$ and $\bbh_n\to X$ induce an isomorphism $\pi_1(A)\ast \pi_1(\bbh_n)\to \pi_1(X)$ from the free product. Since $\bbh_n$ is simply connected, $\pi_1(A)\to \pi_1(X)$ is a group isomorphism. This isomorphism is continuous by the functorality of $\pi_1$. Lastly, the retraction $X\to A$ collapsing $\bbh_n$ to a point induces a retraction $\pi_1(X)\to\pi_1(A)$ of quasitopological groups for which $\pi_1(A)\to \pi_1(X)$ is a section. Thus $\pi_1(A)\to \pi_1(X)$ is a topological embedding and is therefore an isomorphism of quasitopological groups. Since $\pi_1(A)$ is a topological group (Prop. \ref{atopgrp}), so is $\pi_1(X)$.
\end{proof}

Recall that $A_m=D\cup C_1\cup...\cup C_m$. Let $X_m=A_m\vee \bbh_n$ and $Y_m=C_1\cup...\cup C_m\vee \bbh_n$ so that $X_m=Y_m\vee D$. Note that the inclusion $Y_m\to X_m$ is a homotopy equivalence which induces an isomorphism $\pi_r(Y_m)\cong \pi_r(X_m)$ of quasitoplogical groups for all $r\geq 1$. Hence, we may replace $X_m$ with $Y_m$ when necessary.

Consider the finite CW-complex subspaces: $X_{m,k}=A_m\cup \bigcup_{1\leq j\leq k}S_j$ and $Y_{m,k}=C_1\cup ...\cup C_m\cup \bigcup_{1\leq j\leq k}S_j$ so that $X_{m,k}=Y_{m,k}\vee D$. We may view $X_m=\varprojlim_{k}X_{m,k}$ and $Y_m=\varprojlim_{k}Y_{m,k}$ using 
the inverse sequences \[\xymatrix{
... \ar[r] & X_{m,k+1} \ar[r]^-{f_{k+1,k}} &  X_{m,k} \ar[r]^-{f_{k,k-1}} & X_{m,k-1} \ar[r] & ... \\
\ar[r] & Y_{m,k+1} \ar[u]^{\simeq} \ar[r]_-{g_{k+1,k}} &  Y_{m,k} \ar[u]^{\simeq}\ar[r]_-{g_{k,k-1}} & Y_{m,k-1} \ar[u]^{\simeq} \ar[r] & ...
}\]
where the retractions $f_{k+1,k}$ and $g_{k+1,k}$ collapse $S_{k+1}$ to a point. Moreover, the natural retractions $f_{k}:X_m\to X_{m,k}$ and $g_{k}:Y_m\to Y_{m,k}$ induce continuous homomorphisms $\psi:\pi_n(X_m)\to \varprojlim_{k}\pi_n(X_{m,k})$ and $\phi:\pi_n(Y_m)\to \varprojlim_{k}\pi_n(Y_{m,k})$ respectively which make the following diagram commute:
\[\xymatrix{
\pi_n(X_m) \ar[r]^-{\psi} &\varprojlim_{k}\pi_n(X_{m,k})\\
\pi_n(Y_m) \ar[u]^-{\cong} \ar[r]_-{\phi }& \varprojlim_{k}\pi_n(Y_{m,k}) \ar[u]_-{\cong}
}\]
Here, $\pi_n(X_{m,k})$ and $\pi_n(Y_{m,k})$ are discrete groups since $X_{m,k}$ and $Y_{m,k}$ are locally contractible \cite{GHMMthg} and the inverse limit groups have the natural inverse limit topology (as a subspace of the direct product). The vertical morphisms are homeomorphisms and group isomorphisms.

\begin{lemma}
$\psi:\pi_n(X_m)\to \varprojlim_{k}\pi_n(X_{m,k})$ and $\phi:\pi_n(Y_m)\to \varprojlim_{k}\pi_n(Y_{m,k})$ are continuous group isomorphisms.
\end{lemma}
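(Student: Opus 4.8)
The plan is to reduce everything to the single map $\phi$ and then analyze it via covering space theory. Since the inclusions $Y_{m,k}\hookrightarrow X_{m,k}$ and $Y_m\hookrightarrow X_m$ are homotopy equivalences, the vertical maps in the last commuting square above are simultaneously homeomorphisms and group isomorphisms, so $\psi$ is a continuous group isomorphism if and only if $\phi$ is; and continuity of $\phi$ has already been observed. Hence it remains to prove $\phi$ is bijective. Because $n\geq 2$, the space $\bbh_n$ is simply connected and each of its points has a simply connected neighborhood, so $Y_m$ and every $Y_{m,k}$ are semilocally simply connected with fundamental group $F(x_1,\dots,x_m)$; their universal covers are the Cayley tree $T$ of $F(x_1,\dots,x_m)$ with a copy $K_w\cong\bbh_n$ (respectively $\bigvee_{j=1}^{k}S^n$) wedged on at each vertex $w$, and each retraction $g_k$ lifts to the retraction $\tilde g_k$ that collapses the spheres $S_{k+1},S_{k+2},\dots$ at every vertex of $T$ simultaneously. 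Since a covering projection induces isomorphisms on $\pi_r$ for all $r\geq 2$, compatibly with these lifts, it suffices to show that the induced homomorphism $\tilde\phi\colon\pi_n(\wt Y_m)\to\varprojlim_k\pi_n(\wt Y_{m,k})$ is a bijection.

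By Corollary \ref{infinitetreecorollary}, every class in $\pi_n(\wt Y_m)$ is supported on a finite set of vertices of $T$ and, at each such vertex $w$, has a component in $\pi_n(K_w)\cong\pi_n(\bbh_n)$; likewise $\pi_n(\wt Y_{m,k})\cong\bigoplus_{w}\pi_n(\bigvee_{j=1}^{k}S^n)$, with $\tilde g_{k+1,k}$ acting by deleting the $S_{k+1}$-summand at every vertex. Under these identifications $\tilde\phi$ is, vertexwise, the isomorphism $\pi_n(\bbh_n)\xrightarrow{\cong}\varprojlim_k\pi_n(\bigvee_{j=1}^{k}S^n)$ that underlies the stated isomorphism $\pi_n(\bbh_n)\cong\prod_{m\geq 1}\pi_n(S_m)$. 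Injectivity of $\tilde\phi$ is then immediate: if $\tilde\phi$ annihilates a class, then on each of its finitely many support vertices every sphere-retraction of the corresponding component of $\pi_n(\bbh_n)$ vanishes, so that component is trivial by the same isomorphism.

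The main obstacle is surjectivity. After the identifications above it amounts to showing that the canonical monomorphism $\bigoplus_{w\in F(x_1,\dots,x_m)}\varprojlim_k\pi_n(\bigvee_{j=1}^{k}S^n)\hookrightarrow\varprojlim_k\bigoplus_{w\in F(x_1,\dots,x_m)}\pi_n(\bigvee_{j=1}^{k}S^n)$ is onto; equivalently, that every thread $(\xi_k)_k$ in the target has its vertex-supports $\operatorname{supp}(\xi_k)$ — which are nested, since $\tilde g_{k+1,k}$ fixes $T$ pointwise — contained in one common finite subtree $\Gamma\subset T$. Granting this, one identifies $\Gamma\cup\bigcup_{w\in\Gamma}K_w$ with a space $Z$ as in Lemma \ref{treecollapse}, uses the homeomorphism $Z/\Gamma\cong\bbh_n$ together with $\pi_n(\bbh_n)\cong\prod_{m\geq 1}\pi_n(S_m)$ to realize, for each $w\in\Gamma$, the compatible sequence $(\xi_k(w))_k$ as a unique class $[\alpha_w]\in\pi_n(\bbh_n)$, and then assembles the finitely many $[\alpha_w]$ into a class of $\pi_n(\wt Y_m)$ mapping onto $(\xi_k)_k$, again via Lemma \ref{treecollapse}. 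Controlling these vertex-supports is the delicate step on which I would concentrate the effort; the hypothesis $n\geq 2$ is indispensable already for the covering space reduction to be available — for $n=1$ the entire picture collapses, cf. the remark at the end of the previous section — and once the supports are under control the remaining work is bookkeeping with the structure of $\pi_n(\bbh_n)$ and of finite trees decorated with copies of $\bbh_n$ obtained above.
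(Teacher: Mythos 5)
Your overall route is the same as the paper's: reduce to $\phi$ via the homotopy equivalences, pass to universal covers for $n\geq 2$, and identify $\pi_n(\wt{Y_{m,k}})\cong\bigoplus_{w}\bbz^{k}$ and $\pi_n(\wt{Y_m})\cong\bigoplus_{w}\prod_{k\geq 1}\bbz$ so that $\wt{\phi}$ becomes the obvious coordinatewise map; your injectivity argument is correct. Two gaps remain. First, the lemma is stated for the fixed $n\geq 1$, and for $n=1$ there is no universal cover of $Y_m$ to pass to. You acknowledge that your picture collapses there but offer no substitute; the paper disposes of that case by invoking the $\pi_1$-shape injectivity of one-dimensional spaces \cite{Eda98}, which yields injectivity of $\phi$ directly.

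Second, and more seriously: the support-control claim to which you correctly reduce surjectivity --- that every compatible thread $(\xi_k)_k$ in $\varprojlim_{k}\bigoplus_{w}\bbz^{k}$ has all its supports contained in one finite set of vertices --- is false, so this gap cannot be closed. Take distinct vertices $w_1,w_2,\dots$ of the Cayley tree and let $\xi_k$ be supported on $\{w_1,\dots,w_k\}$ with $\xi_k(w_j)$ equal to the $j$-th standard basis vector of $\bbz^{k}$. The bonding map deletes the last coordinate at every vertex, so it sends $\xi_{k+1}(w_{k+1})$ to $0$ and fixes the remaining values; hence $(\xi_k)_k$ is a thread, yet any preimage in $\bigoplus_{w}\prod_{k\geq 1}\bbz$ would have to be nonzero at every $w_j$, contradicting finite support. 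So $\wt{\phi}$, and with it $\phi$ and $\psi$, is a continuous injection that is \emph{not} surjective. You should be aware that the paper's own proof stumbles at exactly this point: the concluding displayed ``isomorphism'' $\bigoplus_{w}\prod_{k\geq 1}\bbz\cong\varprojlim_{k}\bigoplus_{w}\prod_{j=1}^{k}\bbz$ is asserted without justification and is precisely this non-surjective injection. The saving grace is that everything downstream (Corollary \ref{xmishausdorff}, Lemma \ref{xinjects}, Corollary \ref{hausdorff}) uses only continuity and injectivity of $\psi$, so the statement you should actually aim to prove --- and all that your argument, or the paper's, delivers --- is that $\psi$ and $\phi$ are continuous injective homomorphisms.
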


\begin{proof}
As noted above, it suffices to prove the lemma for either $\psi$ or $\phi$; we apply previous results to prove it for $\phi$.

For $n=1$, $Y_m$ is one-dimensional so the injectivity of $\phi$ follows from the $\pi_1$-shape injectivity of one-dimensional spaces \cite{Eda98}.

Suppose $n\geq 2$. Although $Y_m$ is not locally contractible at its basepoint, it is locally simply connected and therefore admits a universal covering space. Let $p:\wt{Y_m}\to Y_m$ and $p_{k}:\wt{Y_{m,j}}\to Y_{m,k}$ be respective the universal covering maps. We view $\wt{Y_{m,k}}$ as the Caley graph on the free group $F(x_1,...,x_m)$ with a copy of $S_1\vee ...\vee S_k$ attached at each vertex. Similarly, we view $\wt{Y_m}$ as the Caley graph on $F(x_1,...,x_m)$ with a copy of $\bbh_n$ attached at each vertex. 

Since $\wt{Y_{m,j+1}}$ is simply connected, the map $g_{j+1,j}\circ p_{j+1}$ induces a retraction $\wt{g}_{j+1,j}: \wt{Y_{m,j+1}}\to  \wt{Y_{m,j}}$ such that $p_j\circ \wt{g}_{j+1,j}=g_{j+1,j}\circ p_{j+1}$. In particular, $\wt{g}_{j+1,j}$ collapses each copy of $S_{j+1}$ to the vertex at which it is adjoined. We now have an inverse sequence of the maps $p_j$ such that $p=\varprojlim_{j}p_j$:\[\xymatrix{
\wt{Y_m}=\varprojlim_{j}\wt{Y_{m,j}} \ar[d]^{p} \ar[r] & ... \ar[r] & \wt{Y_{m,3}} \ar[r]^{\wt{g}_{3,2}} \ar[d]^{p_3} &  \wt{Y_{m,2}} \ar[r]^{\wt{g}_{2,1}} \ar[d]^{p_2} &  \wt{Y_{m,1}} \ar[d]^{p_1} \\
Y_m=\varprojlim_{j}Y_{m,j}  \ar[r] & ... \ar[r] & Y_{m,3} \ar[r]_{g_{3,2}}  &  Y_{m,2} \ar[r]_{g_{2,1}}  &  Y_{m,1}
}\]
The canonical retractions $\wt{g}_j:\wt{Y_m}\to \wt{Y_{m,j}}$ collapse the copies of $S_{j+1},S_{j+2},...$ to the vertex at which they are adjoined and together induce a continuous homomorphism $\wt{\phi}:\pi_n(\wt{Y_m})\to \varprojlim_{j}\pi_n(\wt{Y_{m,j}})$. Since covering maps induce isomorphisms on $\pi_n$, $n\geq 2$ and an inverse limit of isomorphisms is an isomorphism, we see that the vertical morphisms in the lower square of the following diagram are isomorphisms.
\[\xymatrix{
\bigoplus_{w\in F(x_1,...,x_m)}\prod_{k\geq 1}\pi_n(S_k) \ar[d]_-{\cong} \ar[r] & \varprojlim_{k}\bigoplus_{w\in F(x_1,...,x_m)}\prod_{j=1}^{k}\pi_n(S_j)\ar[d]^-{\cong}\\
\pi_n(\wt{Y_m}) \ar[r]^-{\wt{\phi}} \ar[d]_{p_{\#}} & \varprojlim_{k}\pi_n(\wt{Y_{m,k}}) \ar[d]^{\varprojlim_{k}(p_k)_{\#}}\\
\pi_n(Y_m) \ar[r]_-{\phi} & \varprojlim_{k}\pi_n(Y_{m,k})
}\]
Hence, to show that $\phi$ is a group isomorphism it suffices to show that $\wt{\phi}$ is a group isomorphism.

From Remark \ref{finitecaseremark}, we may identify \[\pi_n(\wt{Y_{m,k}})\cong  \bigoplus_{w\in F(x_1,...,x_m)}\prod_{j=1}^{k}\bbz\]with elements represented as sums $\sum_{w}n_w(s_{1}^{w},...,s_{k}^{w})$ where $s_{j}^{k}\in\bbz$ and all but finitely many $n_w\in\bbz$ are zero. Under this identification, the projection map $(\wt{g}_{k+1,k})_{\#}:\pi_n(\wt{Y_{m,k+1}})\to \pi_n(\wt{Y_{m,k}})$ becomes a countable summand
$$\bigoplus_{w\in F(x_1,...,x_m)}\prod_{j=1}^{k+1}\bbz\to \bigoplus_{w\in F(x_1,...,x_m)}\prod_{j=1}^{k}\bbz,\quad \quad
\sum_{w}n_w(s_{1}^{w},...,s_{k}^{w},s_{k+1}^{w})\mapsto \sum_{w}n_w(s_{1}^{w},...,s_{k}^{w})
$$
of the projection map $\prod_{j=1}^{k+1}\bbz\to \prod_{j=1}^{k}\bbz$. By Corollary \ref{infinitetreecorollary}, there is a natural isomorphism \[\pi_n(\wt{X_{m}})\cong \bigoplus_{w\in F(x_1,...,x_m)}\prod_{k\geq 1}\bbz.\]
With this identification, $\wt{\phi}$ becomes identified with the isomorphism 
 \[\bigoplus_{w\in F(x_1,...,x_m)}\prod_{k\geq 1}\bbz\cong \varprojlim_{k}\bigoplus_{w\in F(x_1,...,x_m)}\prod_{j=1}^{k}\bbz\]defined on elements as
$\sum_{w}n_w(s_{1}^{w},s_{2}^{w},...)\mapsto (\sum_{w}n_w(s_{1}^{w},...,s_{k}^{w}))_{k}$.
\end{proof}

\begin{corollary}
$\pi_n(X_m)\cong \pi_n(Y_m)\cong \bigoplus_{\bbn}\prod_{\bbn}\bbz$ as groups.
\end{corollary}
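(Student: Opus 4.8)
The plan is to assemble this corollary directly from the lemma that was just proved together with the group-theoretic identifications already established. The lemma gives that $\psi:\pi_n(X_m)\to \varprojlim_{k}\pi_n(X_{m,k})$ and $\phi:\pi_n(Y_m)\to \varprojlim_{k}\pi_n(Y_{m,k})$ are continuous group isomorphisms, and the diagram preceding the lemma shows $\pi_n(X_m)\cong\pi_n(Y_m)$, so it suffices to identify one of the inverse limit groups as a group. First I would recall from Remark \ref{finitecaseremark} (applied on the covering space, or directly since $X_{m,k}$ is a finite CW-complex) that each $\pi_n(X_{m,k})$ is the discrete group $\bigoplus_{w\in F(x_1,\dots,x_m)}\prod_{j=1}^{k}\bbz$, and that the bonding maps $f_{k+1,k}$ induce, on $\pi_n$, the coordinate-forgetting projection $\bigoplus_{w}\prod_{j=1}^{k+1}\bbz\to\bigoplus_{w}\prod_{j=1}^{k}\bbz$ that drops the $(k+1)$-st factor in each summand — exactly the description worked out in the proof of the lemma for $(\wt g_{k+1,k})_{\#}$.

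Next I would compute the inverse limit $\varprojlim_{k}\bigl(\bigoplus_{w\in F(x_1,\dots,x_m)}\prod_{j=1}^{k}\bbz\bigr)$ as a group. An element of this inverse limit is a compatible sequence; compatibility under the forgetful bonding maps forces a single (finitely supported over $w$) assignment $w\mapsto (s^w_1,s^w_2,\dots)\in\prod_{j\geq 1}\bbz$, which identifies the inverse limit with $\bigoplus_{w\in F(x_1,\dots,x_m)}\prod_{j\geq 1}\bbz$. The only subtlety is whether the finite-support condition over $w$ is preserved in the limit: since each stage has finite support and the support can only shrink or stay the same under the bonding maps, the union of supports along a compatible sequence is the increasing union of finite sets that stabilizes — actually one must note that compatibility forces the support at stage $k$ to be contained in the support at stage $k+1$ reversed, i.e. the supports are nested decreasing, hence eventually constant and finite. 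So the inverse limit is indeed $\bigoplus_{w}\prod_{j\geq 1}\bbz$, and since the index set $F(x_1,\dots,x_m)$ is countably infinite this is abstractly $\bigoplus_{\bbn}\prod_{\bbn}\bbz$.

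Combining, $\pi_n(X_m)\cong\pi_n(Y_m)\cong\varprojlim_{k}\pi_n(X_{m,k})\cong\bigoplus_{w\in F(x_1,\dots,x_m)}\prod_{j\geq 1}\bbz\cong\bigoplus_{\bbn}\prod_{\bbn}\bbz$ as groups, which is the assertion. I expect no real obstacle here: the corollary is essentially a bookkeeping consequence of the lemma, and the one point requiring a word of care is the behavior of the finite-support condition under the inverse limit — but as noted, nestedness of supports makes this immediate. The cleanest write-up simply says: "By the lemma, $\pi_n(X_m)\cong\pi_n(Y_m)\cong\varprojlim_k\pi_n(Y_{m,k})$, and the computation in the proof of the lemma identifies the latter with $\bigoplus_{w\in F(x_1,\dots,x_m)}\prod_{k\geq1}\bbz\cong\bigoplus_{\bbn}\prod_{\bbn}\bbz$."
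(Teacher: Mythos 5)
Your route through the inverse limit contains a genuine error. The inverse limit $\varprojlim_{k}\bigl(\bigoplus_{w\in F(x_1,\dots,x_m)}\prod_{j=1}^{k}\bbz\bigr)$ under the coordinate-forgetting bonding maps is \emph{not} isomorphic to $\bigoplus_{w}\prod_{j\geq 1}\bbz$, and your support analysis runs backwards. Since the bonding map from stage $k+1$ to stage $k$ can only shrink supports, compatibility gives $\mathrm{supp}(a_k)\subseteq\mathrm{supp}(a_{k+1})$: the supports are nested \emph{increasing} in $k$, and an increasing chain of finite sets need not stabilize. Concretely, for distinct words $w_1,w_2,\dots$ let $a_k\in\bigoplus_{w}\prod_{j=1}^{k}\bbz$ be the element supported on $\{w_1,\dots,w_k\}$ whose value at $w_i$ is the $i$-th standard basis vector of $\prod_{j=1}^{k}\bbz$. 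This sequence is compatible (the $w_{k+1}$-term of $a_{k+1}$ truncates to $0$), yet its supports grow without bound, so it comes from no finitely supported element of $\bigoplus_{w}\prod_{j\geq 1}\bbz$. Thus the natural map $\bigoplus_{w}\prod_{j\geq 1}\bbz\to\varprojlim_{k}\bigoplus_{w}\prod_{j=1}^{k}\bbz$ is injective but not surjective, and your identification of the inverse limit fails. (The same objection applies to the surjectivity assertion for $\wt{\phi}$ at the end of the lemma's proof; what is genuinely established there, and all the paper uses afterward, is injectivity, which suffices for the Hausdorff conclusion.)

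The corollary itself is correct, but the justification should bypass the inverse limit entirely: for $n\geq 2$ the covering map $p:\wt{Y_m}\to Y_m$ induces an isomorphism on $\pi_n$, and Corollary \ref{infinitetreecorollary} applied to $\wt{Y_m}$ --- the Cayley graph of $F(x_1,\dots,x_m)$ with a copy of $\bbh_n$ at each vertex --- gives $\pi_n(\wt{Y_m})\cong\bigoplus_{w\in F(x_1,\dots,x_m)}\prod_{\bbn}\bbz$. Since $F(x_1,\dots,x_m)$ is countably infinite, this is $\bigoplus_{\bbn}\prod_{\bbn}\bbz$, and $\pi_n(X_m)\cong\pi_n(Y_m)$ via the homotopy equivalence $Y_m\hookrightarrow X_m$. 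This identification already appears as a displayed isomorphism inside the proof of the lemma, so the clean write-up cites that rather than the inverse limit. (Note also that this argument, like yours, uses the universal cover and hence requires $n\geq 2$; for $n=1$ the group $\pi_1(Y_m)$ is nonabelian and the stated isomorphism cannot hold.)
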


\begin{corollary}\label{xmishausdorff}
As a topological space, $\pi_n(X_m)$ is Hausdorff.
\end{corollary}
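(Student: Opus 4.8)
The plan is to deduce this immediately from the preceding lemma together with a standard fact: any space admitting a continuous injection into a Hausdorff space is itself Hausdorff. First I would recall that each finite CW-complex $X_{m,k}$ is locally contractible, so by \cite{GHMMthg} the group $\pi_n(X_{m,k})$ carries the discrete topology, and is in particular Hausdorff. Consequently the product $\prod_{k}\pi_n(X_{m,k})$ is Hausdorff, and the inverse limit $\varprojlim_{k}\pi_n(X_{m,k})$, being a subspace of this product, is Hausdorff as well.

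Next I would invoke the lemma above, which provides a continuous group isomorphism $\psi:\pi_n(X_m)\to \varprojlim_{k}\pi_n(X_{m,k})$; in particular $\psi$ is a continuous injection. Given distinct classes $[\alpha],[\beta]\in\pi_n(X_m)$, we have $\psi([\alpha])\neq\psi([\beta])$ in the Hausdorff space $\varprojlim_{k}\pi_n(X_{m,k})$, so there are disjoint open sets $U,V$ separating them; then $\psi^{-1}(U)$ and $\psi^{-1}(V)$ are disjoint open neighborhoods of $[\alpha]$ and $[\beta]$ in $\pi_n(X_m)$. Hence $\pi_n(X_m)$ is Hausdorff. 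Equivalently, the diagonal of $\pi_n(X_m)$ is the $(\psi\times\psi)$-preimage of the closed diagonal of $\varprojlim_{k}\pi_n(X_{m,k})$, hence closed in $\pi_n(X_m)\times\pi_n(X_m)$.

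I do not expect any genuine obstacle in this corollary: all of the substantive work has already been carried out in identifying $\pi_n(X_m)$ with an inverse limit of discrete groups (the preceding lemma), and the Hausdorff conclusion is then a purely formal consequence of that identification together with the elementary inheritance of the $T_2$ axiom along continuous injections. The only thing worth stating explicitly is that $\varprojlim_{k}\pi_n(X_{m,k})$ is being given the subspace topology from the product of \emph{discrete} groups, which is exactly the convention fixed in the discussion preceding the lemma.
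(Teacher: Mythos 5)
Your argument is correct and is essentially identical to the paper's: both deduce the result from the continuous injectivity of $\psi$ (the preceding lemma) into the inverse limit of discrete, hence Hausdorff, groups $\pi_n(X_{m,k})$, using the standard fact that a space admitting a continuous injection into a Hausdorff space is Hausdorff. No issues.
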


\begin{proof}
Any space which continuously injects into a Hausdorff space is Hausdorff. Since an inverse limit of discrete groups is Hausdorff and $\psi:\pi_n(X_m)\to \varprojlim_{j}\pi_n(X_{m,j})$ is a continuous injection by the previous lemma, $\pi_n(X_m)$ is Hausdorff.
\end{proof}

Now for each $m$, we consider the retractions $R_{m+1,m}:X_{m+1}\to X_m$ which send $C_{m+1}$ homeomorphically to $C_{\infty}$. This forms an inverse system for which $X=\varprojlim_{m}X_{m}$. The retractions $R_{m}:X\to X_m$ that collapse each circle of $\bigcup_{k>m}C_k$ to $C_{\infty}$ so that $R_{m+1}\circ R_{m+1,m}=R_m$ induce a canonical continuous homomorphism $\Psi:\pi_n(X)\to \varprojlim_{m}\pi_n(X_m)$.

\begin{lemma}\label{xinjects}
$\Psi:\pi_n(X)\to \varprojlim_{m}\pi_n(X_m)$ is a continuous injection.
\end{lemma}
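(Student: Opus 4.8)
The plan is to handle the two assertions separately: continuity is formal, and the substance is in the injectivity statement. Continuity holds because each $(R_m)_{\#}\colon\pi_n(X)\to\pi_n(X_m)$ is a continuous homomorphism by functoriality of $\pi_n\colon\bspaces\to\qtg$, and these maps are compatible with the bonding maps $(R_{m+1,m})_{\#}$ since $R_{m+1,m}\circ R_{m+1}=R_m$; as $\varprojlim_m\pi_n(X_m)$ carries the subspace topology inherited from $\prod_m\pi_n(X_m)$ and a map into a product is continuous exactly when all of its coordinates are, the induced map $\Psi$ is continuous.

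For injectivity, let $\gamma\colon S^n\to X$ represent an element of $\ker\Psi$, so that $R_M\circ\gamma$ is null-homotopic in $X_M$ for every $M$; the goal is to conclude that $\gamma$ is null-homotopic in $X$. The first step is to push $\gamma$ into a finite stage. Apply Lemma \ref{finitelymanlemma} to the convergent sequence $e_m\to e_\infty$ in $X$ with $U=X\setminus\{\bfz\}$: since $D$ meets the rest of $X$ only at $\bfz$, the path component of $U$ containing $e_\infty$ is $D\setminus\{\bfz\}$, which contains no $e_m$ because $e_m$ has first coordinate $\frac{m+1}{m}>1$; as $S^n$ is compact and locally path-connected, the lemma yields an $M_0$ with $e_m\notin\gamma(S^n)$ for all $m>M_0$. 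As in the proof of Proposition \ref{atopgrp}, $A\setminus\{e_{M_0+1},e_{M_0+2},\dots\}$ deformation retracts onto $A_{M_0}$; this retraction fixes $\bfz$, so adjoining the identity on $\bbh_n$ gives a deformation retraction $\rho$ of $X\setminus\{e_{M_0+1},e_{M_0+2},\dots\}$ onto $X_{M_0}$, and in particular a homotopy $H\colon S^n\times[0,1]\to X$ from $\gamma$ to $\rho\circ\gamma$. The second step is to descend: composing $H$ with the retraction $R_{M_0}\colon X\to X_{M_0}$ produces a homotopy in $X_{M_0}$ from $R_{M_0}\circ\gamma$ to $R_{M_0}\circ\rho\circ\gamma$; but $\rho\circ\gamma$ already maps into $X_{M_0}$, on which $R_{M_0}$ restricts to the identity, so $R_{M_0}\circ\rho\circ\gamma=\rho\circ\gamma$, whence $\rho\circ\gamma\simeq R_{M_0}\circ\gamma$ in $X_{M_0}$. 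By hypothesis $R_{M_0}\circ\gamma$ is null-homotopic in $X_{M_0}$, so $\rho\circ\gamma$ is null-homotopic in $X_{M_0}\subseteq X$, and concatenating with $H$ shows $\gamma$ is null-homotopic in $X$; hence $\Psi$ is injective.

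I expect the first step to be the main obstacle: one must rule out maps $S^n\to X$ whose images encounter infinitely many of the outer circles $C_m$, since this is precisely what permits passage to a finite stage $X_{M_0}$. This is already available via Lemma \ref{finitelymanlemma} and the deformation retraction used for $A$ in Proposition \ref{atopgrp}, so no new technical work is needed; once $\gamma$ is confined to $X_{M_0}$, the argument closes using only the retraction identity $R_{M_0}|_{X_{M_0}}=\mathrm{id}$ and functoriality of homotopy. (Note that the hypothesis $(R_M)_{\#}[\gamma]=0$ is in fact used only for the single index $M=M_0$.)
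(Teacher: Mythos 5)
Your proposal is correct and follows essentially the same route as the paper: continuity is formal, and injectivity is obtained by using Lemma \ref{finitelymanlemma} (applied to $e_m\to e_\infty$) to confine $\gamma$ up to homotopy to some $X_{m_0}$ via the deformation retraction of $X\setminus\{e_{m_0+1},e_{m_0+2},\dots\}$ onto $X_{m_0}$, and then observing that this homotoped map equals its image under $R_{m_0}$ and is therefore null-homotopic by the hypothesis at the single stage $m_0$. The only difference is that you spell out the verification of the hypotheses of Lemma \ref{finitelymanlemma} and the descent step $R_{m_0}\circ\rho\circ\gamma=\rho\circ\gamma$, which the paper leaves implicit.
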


\begin{proof}
Continuity of $\Psi$ is immediate from the construction. Suppose $\gamma:S^n\to X$ is a map such that $\gamma_m=R_{m}\circ \gamma:S^n\to X_{m}$ is null-homotopic for each $m$. By Lemma \ref{finitelymanlemma}, $\gamma$ has image in $X\backslash \{e_{m_0+1},e_{m_0+2},...\}$ for some $m_0\geq 1$. Since $X\backslash \{e_{m_0+1},e_{m_0+2},...\}$ deformation retracts onto $X_{m_0}$, $\gamma$ is homotopic (by a homotopy in $X$) to a loop $\gamma '$ with image in $X_{m_0}$. Since $\gamma ' = R_{m_0}\circ \gamma '\simeq R_{m_0}\circ \gamma$ in $X_{m_0}$, $\gamma '$ is null homotopic in $X_{m_0}$. Thus $\gamma$ is null-homotopic in $X$.
\end{proof}

\begin{corollary}\label{hausdorff}
$\pi_n(X)$ is Hausdorff.
\end{corollary}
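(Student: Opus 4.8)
The plan is to deduce this immediately from Lemma \ref{xinjects} and Corollary \ref{xmishausdorff}, following the same pattern used in the proof of Corollary \ref{xmishausdorff}. The one elementary fact I would invoke is that any topological space admitting a continuous injection into a Hausdorff space is itself Hausdorff: given distinct points $a,b$ and a continuous injection $f$, we have $f(a)\neq f(b)$, these can be separated by disjoint open sets in the target, and their preimages separate $a$ and $b$.

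First I would note that $\varprojlim_{m}\pi_n(X_m)$ is by definition a subspace of the product $\prod_{m}\pi_n(X_m)$, equipped with the subspace topology. By Corollary \ref{xmishausdorff}, each $\pi_n(X_m)$ is Hausdorff; since products of Hausdorff spaces are Hausdorff and subspaces of Hausdorff spaces are Hausdorff, the inverse limit $\varprojlim_{m}\pi_n(X_m)$ is Hausdorff.

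Then I would apply Lemma \ref{xinjects}, which supplies a continuous injection $\Psi:\pi_n(X)\to \varprojlim_{m}\pi_n(X_m)$. Combining this with the elementary fact above yields that $\pi_n(X)$ is Hausdorff. I do not anticipate any genuine obstacle: all the real work was already done in establishing Lemma \ref{xinjects} (via Lemma \ref{finitelymanlemma} and the deformation retraction onto $X_{m_0}$) and Corollary \ref{xmishausdorff}, and the present statement is a purely formal consequence.
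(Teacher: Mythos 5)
Your proposal is correct and is essentially identical to the paper's own proof: both invoke Corollary \ref{xmishausdorff} to get that each $\pi_n(X_m)$ is Hausdorff, note that the inverse limit is therefore Hausdorff, and conclude via Lemma \ref{xinjects} and the standard fact that a space admitting a continuous injection into a Hausdorff space is Hausdorff. The only difference is that you spell out the elementary justifications in slightly more detail.
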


\begin{proof}
By Corollary \ref{xmishausdorff}, $\pi_n(X_m)$ is Hausdorff. Since the inverse limit of Hausdorff spaces is Hausdorff, $\varprojlim_{m}\pi_n(X_m)$ is a Hausdorff group. By Lemma \ref{xinjects}, $\pi_n(X)$ continuously injects into a Hausdorff group and is therefore Hausdorff.
\end{proof}

\begin{remark}
To explicitly identify the isomorphism class of $\pi_n(X)$ as a group, replace $X$ with its locally path-connected coreflection and contract the disk $D$ to obtain the weakly homotopy equivalent space $X'=\bigcup_{m\geq 1}C_m\vee \bbh_n$. By applying Corollary \ref{infinitetreecorollary} to the universal covering space of $X'$, we have group isomorphisms $\pi_n(X)\cong\pi_n(X')\cong \bigoplus_{w\in F(x_1,x_2,...)}\prod_{k\geq 1}\bbz\cong \bigoplus_{\bbn}\prod_{\bbn}\bbz$.
\end{remark}

\section{Discontinuity of the $\pi_1$-action}

Finally, we show that for the $n$-dimensional space $X\subseteq\bbr^{n+1}$ considered in the previous sections, the $\pi_1$-action, denoted here as the function $a:\pi_1(X)\times \pi_n(X)\to\pi_n(X)$, $a(x,y)=x\ast y$, is not continuous. 

\begin{theorem}
For any $n\geq 1$, the natural group action $a:\pi_1(X)\times \pi_n(X)\to \pi_n(X)$ is not continuous.
\end{theorem}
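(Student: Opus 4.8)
The discontinuity should be located at the point $(1,0)$, where $a(1,0)=0$, and the plan is to produce an open set $V\subseteq\pi_n(X)$ with $0\in V$ for which $a^{-1}(V)$ is \emph{not} a neighborhood of $(1,0)$. It is worth noting first why the obstruction cannot be witnessed by a single sequence $(g_j,h_j)\to(1,0)$: since $\pi_1(X)$ is the free (Graev) topological group on the compact convergent sequence $\{x_1,x_2,\dots,x_\infty\}$, it is a $k_\omega$-group whose compact subsets — in particular convergent sequences — have bounded word length, and such a sequence lifts to a uniformly convergent family of representing loops; together with the continuity of $\ast$ from Proposition~\ref{jointcont} this forces $a$ to be continuous along every sequence through $(1,0)$. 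The content of the theorem is that neighborhoods of $1$ in $\pi_1(X)$ are not exhausted by such sequences: a basic neighborhood of $1$ contains products $x_{m_1}x_{m_2}\cdots x_{m_k}$ of \emph{arbitrarily many} generators, provided the indices $m_1<m_2<\cdots<m_k$ are chosen far enough out (depending on the neighborhood and on $k$).

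The geometric input I would isolate is the following. For a reduced word $w=x_{m_1}x_{m_2}\cdots x_{m_k}$ with distinct indices and any $m\notin\{m_1,\dots,m_k\}$, the element $w\ast y_m$ lies in the summand of $\pi_n(X)\cong\bigoplus_{u\in F(x_1,x_2,\dots)}\prod_{k\ge 1}\mathbb{Z}$ indexed by $w$, it is nonzero, and \emph{every} map $S^n\to X$ representing it has image containing $e_{m_1},\dots,e_{m_k}$. Indeed, a representative avoiding $e_{m_i}$ would have image in $X\setminus\{e_{m_i}\}$, which deformation retracts onto the subspace of $X$ obtained by crushing $C_{m_i}$ to the basepoint; then $w\ast y_m$ would lie in the image of $\pi_n(X\setminus\{e_{m_i}\})\to\pi_n(X)$, which — by the retraction computations underlying the identification of $\pi_n(X)$ — consists precisely of the summands indexed by words not involving $x_{m_i}$, contradicting the presence of $x_{m_i}$ in $w$.

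Granting a suitable $V$ (open, containing $0$, and missing every class $w\ast y_m$ once $w$ has at least some fixed number $N$ of distinct letters), the argument closes as follows. Let $N_1\times N_2$ be any basic neighborhood of $(1,0)$. Choose $k>N$, then choose $m_1<\cdots<m_k$ large enough that $w:=x_{m_1}\cdots x_{m_k}\in N_1$, and then choose $m>\max m_i$ large enough that $y_m\in N_2$ (possible since $y_m\to 0$). Then $(w,y_m)\in N_1\times N_2$ while $a(w,y_m)=w\ast y_m\notin V$, so $N_1\times N_2\not\subseteq a^{-1}(V)$. As $N_1\times N_2$ was arbitrary, $a^{-1}(V)$ is not a neighborhood of $(1,0)$, hence $a$ is not continuous.

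The crux — and the step I expect to be the main obstacle — is the construction of $V$. The naive choice fails: the set of classes admitting a representative that meets at most $N$ of the $e_m$ is not open, since $x_m\ast y_m\to 0$, and more generally $(x_{m_1}\cdots x_{m_k})\ast y_j\to 0$ for fixed $m_1,\dots,m_k$ (by separate continuity), so any open neighborhood of $0$ must tolerate every fixed finite pattern of the $e_m$. Thus $V$ must be calibrated so that a configuration of $k$ circles is ``tolerated'' only when its indices are not too small relative to $k$, and checking that the resulting set is open in the quotient topology of $\pi_n(X)$ is exactly where Lemma~\ref{boundinglemma} enters: because $\Omega^n(X)$ is metrizable, openness of $V$ reduces to sequential closedness of the complementary set of maps, and Lemma~\ref{boundinglemma} supplies the needed control — a sequence of maps driven through more and more of the $e_m$ cannot converge uniformly — that keeps this complement away from representatives of $0$.
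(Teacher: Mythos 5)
Your overall strategy is the complement of the paper's and is essentially the same idea: the paper produces a closed set $C\subseteq\pi_n(X)$ not containing the identity such that $a^{-1}(C)$ is not closed, which is exactly your ``open $V$ containing $0$ whose preimage is not a neighborhood of $(1,0)$'' with $V=\pi_n(X)\setminus C$. Your geometric input (every representative of $w\ast y_m$ must pass through the points $e_{m_1},\dots,e_{m_k}$ determined by the letters of $w$) is Proposition \ref{largeimageprop}, and your closing argument (choose $k$ large, then indices large enough that the word lands in $N_1$, then the sphere class in $N_2$) matches the paper's proof that $a^{-1}(C)$ is not closed. The problem is the step you yourself flag as the crux: the set $V$ is never constructed, and the shape you propose for it cannot exist. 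You ask for an open $V\ni 0$ missing \emph{every} class $w\ast y_m$ once $w$ has at least $N$ distinct letters. But take $w_k=x_kx_{k+1}\cdots x_{k+N-1}$ and $m_k=k+N$. The representatives $(\ell_k\cdot\ell_{k+1}\cdots\ell_{k+N-1})\ast\gamma_{m_k}$ converge uniformly in $\Omega^n(X)$ to $(\ell_\infty\cdot\ell_\infty\cdots\ell_\infty)\ast c$ (with $c$ the constant map), which is null-homotopic, so by continuity of $q_n$ the nonzero classes $w_k\ast y_{m_k}$ converge to $0$ in $\pi_n(X)$. Hence every open neighborhood of $0$ contains classes of the form you want to exclude, for every $N$; this is a strengthening (using joint continuity of $\ast$ on the mapping spaces, not just separate continuity of the action) of the phenomenon you correctly observe kills the ``naive'' $V$, and it kills the refined one too.

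The repair is not a calibration of indices but a change in the $\pi_n$-coordinate: the paper takes $C=\{(x_ix_{i+1}\cdots x_{i+j})\ast y_j^{\,i}\mid i,j\geq1\}$, i.e.\ it raises the sphere generator to the power $i$ so that the starting index of the word is recorded in an exponent that survives the continuous retraction $r_{\#}:\pi_n(X)\to\pi_n\bigl(\bigvee_{j\in J}S_j\bigr)$ onto a \emph{discrete} group. Closedness of $C$ then comes in two steps: Lemma \ref{boundinglemma} bounds $j_k$ for a uniformly convergent sequence of representatives (this is the role you correctly assign to it), and discreteness of $\pi_n$ of a finite wedge of $n$-spheres forces $i_k$ to be eventually constant, so the classes in question form a finite, hence closed, subset of the Hausdorff group $\pi_n(X)$. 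Meanwhile $y_j^{\,i}$ still lies in any prescribed neighborhood $U$ of $0$ for $j$ large and \emph{all} $i$, because the $i$-fold concatenations $\gamma_j^{\,i}$ all have image in $S_j\subseteq W$ for a suitable open $W$; this is what lets the word index $i$ run to infinity in the non-closedness half of the argument. Without some device of this kind that decouples ``lying in every neighborhood of $0$'' from ``converging to $0$,'' your $V$ cannot be made open, so as written the proof has a genuine gap at its central step. (Your opening paragraph about sequences is motivation only and is not needed; note also that it implicitly assumes convergent sequences in $\pi_n(X)$ lift to $\Omega^n(X)$, which is not established.)
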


Note that the case $n=1$ indicates that if $X=A\vee \bbh$ is the one-point union of $A$ and the Hawaiian earring, then the action by conjugation in $\pi_1(X)$ is discontinuous.

Recall that $\pi_1(X)$ is freely generated by the set $\{x_1,x_2,...\}$ and $y_j=[\gamma_j]$ is a generator of $\pi_n(S_j)$. Let $C=\{(x_{i}x_{i+1}...x_{i+j})\ast y_{j}^{i}|i,j\geq 1\}$. We show that $C$ is closed in $\pi_n(X)$ but that $a^{-1}(C)$ is not closed in $\pi_1(X)\times \pi_n(X)$. Our choice of $C$ is inspired Fabel's argument in \cite{Fab11CG}. 

\begin{proposition}\label{largeimageprop}
If $\gamma:S^n\to X$ represents $(x_{i}x_{i+1}...x_{i+j})\ast y_{j}^{i}$, then $\bigcup_{k=i}^{i+j}C_k\subset \gamma(S^n)$. In particular, $\{e_i,e_{i+1},...e_{i+j}\}\subseteq \gamma(S^n)$.
\end{proposition}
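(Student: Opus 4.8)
The plan is to reduce the statement to a single circle at a time: it suffices to fix an integer $k$ with $i\le k\le i+j$ and prove $C_k\subseteq\gamma(S^n)$, since then the displayed inclusion follows by taking the union over such $k$, and the ``in particular'' claim follows from $e_k\in C_k$. To detect the presence of $C_k$ in the image of $\gamma$ I would push $X$ onto the wedge $C_k\vee S_j$ of a single circle and a single $n$-sphere. Such a based retraction $r\colon X\to C_k\vee S_j$ is built by pasting two maps: on the summand $A$, compose the retraction $\rho_k\colon A\to A_k$ from the proof of Proposition~\ref{atopgrp} (which sends $C_{k+1},C_{k+2},\dots$ homeomorphically onto $C_\infty$) with the retraction $A_k\to C_k$ that is the identity on $C_k$ and collapses $D\cup C_1\cup\cdots\cup C_{k-1}$ to $\bfz$; on the summand $\bbh_n$, use the standard retraction $\bbh_n\to S_j$ collapsing $\bigcup_{m\ne j}S_m$ to $\bfz$. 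By construction $r$ restricts to the identity on $C_k$ and on $S_j$, and moreover $r^{-1}(p)=\{p\}$ for every $p\in C_k\setminus\{\bfz\}$, since nothing outside $C_k$ is mapped into $C_k\setminus\{\bfz\}$.

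Next I would track where the class $(x_i x_{i+1}\cdots x_{i+j})\ast y_{j}^{i}$ goes under $r_\#$. On generators, $r_\#(x_m)$ equals a fixed generator $a$ of $\pi_1(C_k)$ if $m=k$ and is trivial otherwise, while $r_\#(y_m)$ equals a fixed generator $b$ of the subgroup $\pi_n(S_j)\le\pi_n(C_k\vee S_j)$ if $m=j$ and is trivial otherwise. Since $i\le k\le i+j$, exactly one letter of the word $x_i\cdots x_{i+j}$ survives, so by naturality of the $\pi_1$-action (Proposition~\ref{jointcont} and functoriality) $r_\#\bigl((x_i\cdots x_{i+j})\ast y_{j}^{i}\bigr)=a\ast b^{i}$. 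The crucial point is that $a\ast b^{i}\ne b^{i}$ in $\pi_n(C_k\vee S_j)\cong\pi_n(S^1\vee S^n)$: for $n=1$ the action is conjugation and $ab^{i}a^{-1}\ne b^{i}$ in the free group $F(a,b)$ because $i\ge 1$; for $n\ge 2$ this is the classical nontriviality of the $\pi_1$-action on $\pi_n(S^1\vee S^n)$, visible on the universal cover $\bbr$ with an $n$-sphere wedged at each integer, where $\pi_n\cong\bigoplus_{\bbz}\bbz$ with $\pi_1$ acting by the index shift, so that $a\ast b^{i}=b^{i}$ would force $a\ast b=b$ by torsion-freeness.

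Finally, suppose for contradiction that $\gamma(S^n)$ misses a point $p\in C_k\setminus\{\bfz\}$. Because $r^{-1}(p)=\{p\}$, the composite $r\circ\gamma$ is a based map into the open subspace $(C_k\vee S_j)\setminus\{p\}$, which deformation retracts rel $\bfz$ onto $S_j$ by collapsing the arc $C_k\setminus\{p\}$ to $\bfz$. Hence $a\ast b^{i}=[r\circ\gamma]$ lies in the image of $\pi_n(S_j)\to\pi_n(C_k\vee S_j)$; applying the collapse $\sigma\colon C_k\vee S_j\to S_j$, which kills $a$ and is a left homotopy inverse of the inclusion $S_j\hookrightarrow C_k\vee S_j$, then forces $a\ast b^{i}=b^{i}$, contradicting the previous paragraph. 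Therefore $C_k\setminus\{\bfz\}\subseteq r(\gamma(S^n))$, and since $r$ is injective over $C_k\setminus\{\bfz\}$ while $\bfz=\gamma(\text{basepoint})\in\gamma(S^n)$, we conclude $C_k\subseteq\gamma(S^n)$.

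The only non-routine ingredient is the standard nontriviality of the $\pi_1$-action on $\pi_n(S^1\vee S^n)$; the main bookkeeping to be careful about is that $r$ is a well-defined continuous retraction onto $C_k\vee S_j$ and the identification of $r_\#$ on the generators $x_m$ and $y_m$. Everything after that is a routine ``missing a point'' argument, run once for each $k\in\{i,i+1,\dots,i+j\}$.
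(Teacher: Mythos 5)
Your argument is correct, but it takes a genuinely different route from the paper. The paper first homotopes $\gamma$ into a finite stage $X_{m_0}$ with $m_0\geq i+j$ (via Lemma \ref{finitelymanlemma}), then lifts to the universal cover of $X_{m_0}$ --- the Cayley graph of $F(x_1,\dots,x_{m_0})$ with a copy of the simply connected space $D\vee\bbh_n$ attached at each vertex --- and uses the fact that the tree is uniquely arcwise connected: the unique lift of a representative of $(x_i\cdots x_{i+j})\ast y_{j}^{i}$ must traverse the minimal edge path from $e$ to the vertex $x_i\cdots x_{i+j}$, whose edges project onto $C_i,\dots,C_{i+j}$, capturing all the circles at once. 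You instead detect one circle at a time by retracting onto $S^1\vee S^n$ and invoking the nontriviality of the classical $\pi_1$-action on $\pi_n(S^1\vee S^n)\cong\bigoplus_{\bbz}\bbz$ (conjugation in $F(a,b)$ when $n=1$), combined with a point-missing argument. Both proofs ultimately rest on a covering-space computation, but yours only needs the universal cover of $S^1\vee S^n$ rather than of the infinite wedge $X_{m_0}$, and it makes fully explicit (via the injectivity of $r$ over $C_k\setminus\{\bfz\}$) the step that in the paper is compressed into the assertion that the homotoped map $\delta$ has image inside $\gamma(S^n)$. Two small points to tidy, neither a gap: naturality of the $\pi_1$-action under $r_{\#}$ is standard but is not what Proposition \ref{jointcont} says (that proposition concerns continuity), and the continuity of your retraction $A\to C_k$ deserves the one-line observation that $C_k\setminus\{\bfz\}$ is open in $A$ because the remaining circles accumulate only on $C_{\infty}\subseteq D$.
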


\begin{proof}
As we have argued previously, $\gamma$ is homotopic to a loop $\delta$ with image in $X_{m_0}\cap Im(\gamma)$ for some $m_0\geq i+j$. Let $\wt{X_{m_0}}$ be the universal covering space of $X_{m_0}$. Again, we identify $\wt{X_{m_0}}$ with the Caley graph of the free group $F(x_1,...,x_{m_0})$ (with vertices identified with elements of $F(x_1,...,x_{m_0})$ and basepoint the identity element $e$) with a copy of the simply connected space $D\vee \bbh_n$ attached at each vertex. Since $S^n$ is simply connected, there is a unique lift $\wt{\delta}:(S^n,(1,0,...,0))\to (\wt{X_m},e)$. Since the Caley graph is uniquely arc-wise connected, in order for $\delta$ to represent $(x_{i}x_{i+1}...x_{i+j})\ast y_{j}^{i}$, the image of $\wt{\delta}$ must contain the unique minimal edge path from the vertex $e$ to the vertex $(x_{i}x_{i+1}...x_{i+j})$ along the edges $[e,x_{i}]$,$[x_{i},x_{i}x_{i+1}]$,...,$[x_{i}x_{i+1}...x_{i+j-1},x_{i}x_{i+1}...x_{i+j}]$. These edges are mapped respectively by the covering map onto $C_{i}$, $C_{i+1}$,...,$C_{i+j}$, so we have $\bigcup_{k=i}^{i+j}C_k\subset \delta(S^n)\subset\gamma(S^n)$.
\end{proof}

\begin{proposition}
$a^{-1}(C)$ is not closed in $\pi_1(X)\times \pi_n(X)$.
\end{proposition}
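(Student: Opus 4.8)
The plan is to exhibit an explicit sequence of points in $a^{-1}(C)$ that converges in $\pi_1(X)\times \pi_n(X)$ to a point outside $a^{-1}(C)$. The natural candidate is built from the generators that were already shown to converge: recall $x_m\to 1$ in $\pi_1(X)$, and in $\pi_n(X)\cong \varprojlim_m\pi_n(X_m)$ the generator $y_j$ of $\pi_n(S_j)$ satisfies $y_j\to 0$ as $j\to\infty$ (since for each fixed $m$, $(R_m)_{\#}(y_j)$ is eventually trivial, so the images converge to $0$ in each factor of the inverse limit). First I would consider, for each $j\geq 1$, the pair $(x_1 x_2\cdots x_{j+1},\, y_j)$, which lies in $a^{-1}(C)$ by taking $i=1$ in the definition of $C$. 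The first coordinate is the fixed word $w_j = x_1\cdots x_{j+1}$, which does \emph{not} converge; so instead I would fix the first coordinate and vary the exponent: consider the sequence $\bigl(w,\, y_j^{\,1}\bigr)$ is wrong too. The right move is to let $i$ grow: take the pair $p_i = \bigl(x_i x_{i+1}\cdots x_{i+j},\, y_j^{\,i}\bigr)$ with $j$ \emph{fixed} and $i\to\infty$. Then the first coordinate $x_i\cdots x_{i+j}$ is a product of $j+1$ generators each tending to $1$, hence (since $\pi_1(X)$ is a topological group, by Proposition 8) the first coordinates converge to $1\in\pi_1(X)$; the second coordinates $y_j^{\,i}$ — here I must be careful, since $y_j$ has infinite order — so instead fix the \emph{exponent} to be $1$ by reorganizing: the cleanest choice is $p_i=\bigl(x_i x_{i+1}\cdots x_{i+j(i)},\, y_{j(i)}^{\,i}\bigr)$ is overcomplicating. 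Let me restart the indexing: for each $m\geq 1$ set $p_m = \bigl(x_{m}x_{m+1}\cdots x_{m+1},\, y_{1}^{\,m}\bigr)$, i.e. take $j=1$, so $p_m=(x_m x_{m+1},\, y_1^{\,m})\in a^{-1}(C)$. Now $x_m x_{m+1}\to 1$ and $y_1^{\,m}$ ranges over an infinite subset of $\pi_n(X_1)\cong\prod_{\bbn}\bbz$, which is a nontrivial sequence — this still does not converge.

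The resolution is that we do \emph{not} need the second coordinate to move: fix $y\in\pi_n(X)$ to be $y_1$ and take the sequence $q_m = (x_m x_{m+1},\, y_1)$. For each $m$ this lies in $a^{-1}(C)$: indeed $a(x_m x_{m+1}, y_1) = (x_m x_{m+1})\ast y_1^{\,1}$, which is the element of $C$ with $i=m$, $j=1$ and exponent $i=m$ — no, the definition forces the exponent to equal $i=m$, not $1$. So I need $q_m=(x_m x_{m+1},\, y_1^{\,m})$ to land in $C$, and the issue is genuinely that the $\pi_n$-coordinate $y_1^{\,m}$ is unbounded. The honest conclusion is that the sequence must move in \emph{both} coordinates, and the key point — the main obstacle — is to show the limit point $(1, y)$ (for the appropriate limit $y$ of the $\pi_n$-coordinates) does \emph{not} lie in $a^{-1}(C)$, i.e. that $1\ast y = y\notin C$. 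So the structure of the argument is: (1) produce a sequence $(g_m, z_m)\in a^{-1}(C)$ with $g_m\to 1$ in $\pi_1(X)$ and $z_m\to z$ in $\pi_n(X)$; (2) identify $z$ explicitly; (3) show $z\notin C$, using Proposition 11 (the ``large image'' proposition) — any representative of an element of $C$ of the form $(x_i\cdots x_{i+j})\ast y_j^{\,i}$ has image containing $\{e_i,\dots,e_{i+j}\}$, and in particular containing $C_i$ for some $i$ equal to its leading index; if $z$ can be represented by a map whose image meets only finitely many $C_k$, or misses all $e_k$, or lies in a bounded piece, then $z\notin C$.

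Concretely, I would take $g_m = x_m x_{m+1}$ and $z_m = (x_m x_{m+1})\ast y_1^{\,m} \in C$ (so $(g_m,z_m)\in a^{-1}(C)$ trivially, as $a(g_m,z_m)=z_m$, wait — that requires $z_m\in C$, which is immediate), and now the content is: \textbf{(i)} $g_m\to 1$ (done, since $\pi_1(X)$ is a topological group and each $x_k\to 1$); \textbf{(ii)} $z_m\to 0$ in $\pi_n(X)$. For (ii), use $\pi_n(X)\hookrightarrow\varprojlim_m\pi_n(X_m)\hookrightarrow\prod_{m,k}\pi_n(X_{m,k})$ from Lemma~13 and Lemma~16: it suffices to check that for each fixed $(m,k)$ the image of $z_r$ under $\pi_n(X)\to\pi_n(X_{m,k})$ is eventually $0$. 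Since $z_r=(x_r x_{r+1})\ast y_1^{\,r}$ and $R_m$ collapses $C_k$ for $k>m$, for $r>m$ the word $x_r x_{r+1}$ becomes trivial in $\pi_1(X_m)$ — it maps into the conjugation action by $\ell_\infty$-type loops which bound in $D$ — so $(R_m)_{\#}(z_r) = y_1^{\,r}$ conjugated by something trivial $= y_1^{\,r}$; but then projecting further to $\pi_n(X_{m,k})$ kills $S_1$? No, $S_1$ survives. This shows $z_m$ does \emph{not} go to $0$, so the limit (if any) is some nonzero $z$, and I must instead choose the exponents to force convergence: replace $y_1^{\,m}$ by $y_m^{\,m}$, i.e. take $z_m = (x_m\cdots x_{2m})\ast y_m^{\,m}$, no — cleanest is $z_m = (x_m x_{m+1})\ast y_m^{\,m}$, so $(g_m,z_m)=(x_m x_{m+1}, y_m^{\,m}\text{-related})\in a^{-1}(C)$ with $i=m,\,j=1$. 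Now $y_m\to 0$ and one checks $(R_\ell)_{\#}$ kills $z_m$ for $m>\ell$ (since $S_m$ is collapsed and $x_m x_{m+1}$ trivializes), giving $z_m\to 0$; meanwhile $g_m=x_m x_{m+1}\to 1$. So $(g_m,z_m)\to (1,0)$. Finally, $(1,0)\notin a^{-1}(C)$ because $a(1,0)=0$, and $0\notin C$: every element of $C$ has a representative whose image contains some $e_i$ by Proposition~11, whereas the constant map at the basepoint represents $0$ and the constant map's image is $\{\bfz\}$ — but $C$-membership is about \emph{existence} of such a representative, and Proposition~11 says \emph{every} representative of a $C$-element has large image, so $0\in C$ would force the constant map to have image meeting $\{e_1,e_2,\dots\}$, which is absurd. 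Hence $a^{-1}(C)$ is not closed.

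The step I expect to be the genuine obstacle is \textbf{(ii)}, verifying $z_m\to 0$ in $\pi_n(X)$: one must pin down the image of $(x_m x_{m+1})\ast y_m^{\,m}$ under each retraction $(R_\ell)_{\#}$ and then under $\pi_n(X_\ell)\to \varprojlim_k\pi_n(X_{\ell,k})$, showing that for fixed $\ell$ it vanishes once $m$ is large — intuitively because $R_\ell$ collapses the circle $C_m$ and the sphere $S_m$ on which the data is supported — and simultaneously checking that the $\pi_1$-twist $x_m x_{m+1}$ really does become inner-by-trivial after applying $R_\ell$, so that the action contributes nothing. This is where the explicit covering-space description of $X_\ell$ and the inverse-limit identification of $\pi_n(X)$ from the previous section do the work; once that computation is in hand, steps (i) and (iii) are immediate from Proposition~8 and Proposition~11 respectively. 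The overall contradiction with the earlier-established closedness of $C$ in $\pi_n(X)$ then completes the proof that the action $a$ fails to be continuous.
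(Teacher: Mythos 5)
There is a genuine gap: the sequence you finally settle on does not lie in $a^{-1}(C)$. The definition of $C$ rigidly couples its three indices: the word must be $x_i x_{i+1}\cdots x_{i+j}$, the sphere index must equal $j$ (one less than the word length), and the exponent must equal $i$. Your final choice $(x_m x_{m+1})\ast y_m^{\,m}$ has word length $2$, hence $j=1$, but sphere index $m$, so it is not an element of $C$ for $m\geq 2$; letting the sphere index grow in order to force convergence of the $\pi_n$-coordinate destroys membership in $C$, which is exactly why $C$ was designed this way. There is also a confusion about what a point of $a^{-1}(C)$ is: you assert $a(g_m,z_m)=z_m$, but $a(g,z)=g\ast z$, which equals $z$ only when $g=1$; the preimage pair witnessing the element $(x_i\cdots x_{i+j})\ast y_j^{\,i}\in C$ is $\bigl(x_i\cdots x_{i+j},\,y_j^{\,i}\bigr)$, and the non-convergence of $y_j^{\,i}$ for fixed $j$ as $i\to\infty$ (visible under the retraction $r_{\#}:\pi_n(X)\to\pi_n(S_j)\cong\bbz$, where it maps to $i$) is the obstruction you correctly keep running into but never overcome.

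The missing idea is to abandon the search for a convergent sequence and instead show directly that $(1,1)$ is a limit point of $a^{-1}(C)$, choosing the indices in an order dictated by a given basic neighborhood $V\times U$ of $(1,1)$. Given $U$, pick a neighborhood $W$ of the basepoint with $\langle S^n,W\rangle\subseteq q_n^{-1}(U)$; since all but finitely many spheres $S_j$ lie in $W$, there is a single $J$ such that the $i$-fold concatenation $\gamma_J^{\,i}$ has image in $S_J\subseteq W$ for \emph{every} $i\geq 1$, whence $y_J^{\,i}\in U$ for all $i$ simultaneously. This is what defeats the unbounded-exponent problem, and it cannot be seen from the inverse-limit computations you propose in step (ii), since $\Psi$ is only a continuous injection, not an embedding. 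Then, with $J$ fixed, $x_i x_{i+1}\cdots x_{i+J}\to 1$ in $\pi_1(X)$ (the loops $\ell_i\cdot\ell_{i+1}\cdots\ell_{i+J}$ converge uniformly to the null-homotopic loop $\ell_\infty^{J+1}$), so some $I$ gives $x_I\cdots x_{I+J}\in V$, and $\bigl(x_I\cdots x_{I+J},\,y_J^{\,I}\bigr)\in (V\times U)\cap a^{-1}(C)$. Note that the order of quantifiers ($J$ depends on $U$, then $I$ depends on $V$ and $J$) is essential, and since $\pi_1(X)$ need not be first countable there is no evident diagonal extraction of a single convergent sequence. Your step (iii), that the identity is not in $C$ by Proposition \ref{largeimageprop}, is correct and is the one part that matches the paper.
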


\begin{proof}
Since $C$ does not contain the identity of $\pi_n(X)$, the identity $(1,1)$ of $\pi_1(X)\times \pi_n(X)$ is not in $a^{-1}(C)$. We show that $(1,1)$ is a limit point of $a^{-1}(C)$. Let $V\times U$ be a basic open neighborhood of $(1,1)$ in $\pi_1(X)\times \pi_n(X)$. Let $q_i:\Omega^i(X)\to\pi_i(X)$ be the natural quotient map. Find an open neighborhood $W$ of $x_0$ such that $\langle S^n,W\rangle \subseteq q_{n}^{-1}(U)$ in $\Omega^{n}(X)$. Since all but finitely many of the spheres $S_j\subset \bbh_n$ lie in $W$ all but finitely many of the generators $\gamma_j:S^n\to X$ have image in $\langle S^n,W\rangle$. It follows that there exists a $J$ such that if $\gamma_{j}^{i}$ is the map $\gamma_j$ concatenated with itself $i$-times, then $\gamma_{j}^{i}\in \langle S^n,W\rangle$ for all $j\geq J$ and all $i\geq 1$. Thus for homotopy classes, we have $y_{j}^{i}\in U$ for all $j\geq J$ and $i\geq 1$.

Now that $J$ is fixed, notice that the sequence $\ell_{i}\cdot \ell_{i+1}\cdot ...\cdot \ell_{i+J}$, $i\geq 1$ of loops in $\Omega(X)$ converges uniformly to the null-homotopic loop $\ell_{\infty}\cdot \ell_{\infty}\cdots \ell_{\infty}$ (with $J+1$-factors). The continuity of $q_1:\Omega(X)\to\pi_1(X)$ now gives $x_{i}x_{i+1}\cdots x_{i+J}\to 1$ as a sequence in $\pi_1(X)$. Thus there is an $I$ such that $x_{I}x_{I+1}\cdots x_{I+J}\in V$. Since we also have $y_{J}^{I}\in U$, it follows that $((x_{I}x_{I+1}...x_{I+J}), y_{J}^{I})\in (V\times U)\cap a^{-1}(C)$.
\end{proof}

\begin{proposition}
$C$ is closed in $\pi_n(X)$.
\end{proposition}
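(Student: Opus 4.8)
The plan is to show that $q_n^{-1}(C)$ is closed in $\Omega^n(X)$; since $q_n:\Omega^n(X)\to\pi_n(X)$ is a quotient map by definition, this gives that $C$ is closed. Because $X\subseteq\bbr^{n+1}$ is compact metric and $S^n$ is compact metric, $\Omega^n(X)$ carries the metrizable topology of uniform convergence, so it suffices to prove $q_n^{-1}(C)$ is sequentially closed: whenever $\gamma_k\to\gamma$ uniformly with $[\gamma_k]\in C$ for every $k$, we must check $[\gamma]\in C$. Write $[\gamma_k]=(x_{i_k}x_{i_k+1}\cdots x_{i_k+j_k})\ast y_{j_k}^{i_k}$. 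The crux of the argument is to prove that only finitely many index pairs $(i_k,j_k)$ occur.

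To bound the exponents $j_k$, let $\pi_A:X\to A$ be the retraction collapsing $\bbh_n$ to $\bfz$. If $j_k\to\infty$ along a subsequence, then Proposition \ref{largeimageprop} gives $\{e_{i_k},\dots,e_{i_k+j_k}\}\subseteq\gamma_k(S^n)$, which is a set of $j_k+1$ distinct points among $\{e_1,e_2,\dots\}$; since $\pi_A$ fixes every $e_m$, we get $|\{e_1,e_2,\dots\}\cap(\pi_A\circ\gamma_k)(S^n)|\geq j_k+1\to\infty$. As $\pi_A\circ\gamma_k\to\pi_A\circ\gamma$ uniformly, this contradicts Lemma \ref{boundinglemma} applied to the Peano continuum $S^n$. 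Hence $\{j_k:k\geq 1\}$ is finite.

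To bound the multiplicities $i_k$ for a fixed exponent $j$, let $\sigma_j:X\to S_j$ be the retraction collapsing $A$ together with all $S_m$, $m\neq j$, to $\bfz$, and recall $\pi_n(S_j)\cong\bbz$ is discrete since $S_j$ is a CW-complex. Naturality of the action, $(\sigma_j)_{\#}(\alpha\ast\beta)=(\sigma_j)_{\#}(\alpha)\ast(\sigma_j)_{\#}(\beta)$, is immediate from the defining formula for $\ast$; since the loops $\ell_m$ lie in $A$ (so $(\sigma_j)_{\#}(x_i\cdots x_{i+j'})=1$) and $(\sigma_j)_{\#}(y_{j'})$ is a generator of $\pi_n(S_j)$ when $j'=j$ and trivial otherwise, we obtain $(\sigma_j)_{\#}\big((x_i\cdots x_{i+j'})\ast y_{j'}^{i}\big)=i$ when $j'=j$ (and $0$ when $j'\neq j$). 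Now $\gamma\mapsto[\sigma_j\circ\gamma]$ is a continuous map $\Omega^n(X)\to\pi_n(S_j)$, so for the (sub)sequence of those $k$ with $j_k=j$ the integers $i_k=(\sigma_j)_{\#}([\gamma_k])$ converge in the discrete group $\pi_n(S_j)$ and are therefore eventually constant; in particular only finitely many values $i_k$ are paired with this $j$. Combining with the previous paragraph, $\{(i_k,j_k):k\geq 1\}$, and hence $\{[\gamma_k]:k\geq 1\}$, is a finite subset of $C$.

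Finally, $[\gamma_k]\to[\gamma]$ by continuity of $q_n$, and a convergent sequence taking only finitely many values in a $T_1$ space has its limit equal to one of those values; since $\pi_n(X)$ is Hausdorff by Corollary \ref{hausdorff}, we conclude $[\gamma]\in\{[\gamma_k]:k\geq 1\}\subseteq C$, so $\gamma\in q_n^{-1}(C)$. I expect the main obstacle to be establishing the finiteness of the index set: the bound on $j_k$ rests on the geometric input of Proposition \ref{largeimageprop} and Lemma \ref{boundinglemma}, while the bound on $i_k$ relies on the sphere-retractions $\sigma_j$ together with the discreteness of $\pi_n(S^n)$. Once both bounds are in hand, the convergence/Hausdorffness bookkeeping that finishes the proof is routine.
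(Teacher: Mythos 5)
Your proof is correct and follows essentially the same route as the paper: bound $j_k$ via Proposition \ref{largeimageprop}, the retraction onto $A$, and Lemma \ref{boundinglemma}; bound $i_k$ by retracting onto spheres and using discreteness of their homotopy groups together with naturality of the action; then finish with Corollary \ref{hausdorff}. The only (immaterial) difference is that you use one retraction $\sigma_j:X\to S_j$ for each $j$ in the finite set of exponents, whereas the paper uses a single retraction onto the finite wedge $\bigvee_{j\in J}S_j$.
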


\begin{proof}
Since $q_n:\Omega^n(X)\to \pi_n(X)$, $q_n(\gamma)=[\gamma]$ is quotient, it suffices to show $q_{n}^{-1}(C)$ is closed in the metric space $\Omega^n(X)$ of based maps $S^n\to X$. Suppose $\delta_k\to \delta$ is a convergent sequence in $\Omega^n(X)$ and $q_n(\delta_k)=[\delta_k]\in C$ for all $k$. In particular, suppose $[\delta_k]=(x_{i_k}x_{i_k+1}...x_{i_k+j_k})\ast y_{j_k}^{i_k}$ for sequences of integers $i_k,j_k\in \bbn$. It suffices to show $[\delta]\in C$. To start, we will show that both sequences $i_k$ and $j_k$ must be eventually constant.

Suppose $j_k$ is unbounded. If necessary, replace $j_k$ with a subsequence to ensure that $j_1<j_2<...$. By Proposition \ref{largeimageprop}, we have $\{e_{i_k},e_{i_k+1},...,e_{i_k+j_k}\}\subset \delta_k(S^n)$ for each $k$. Let $R:X\to A$ be the retraction collapsing $\bbh_n$ to the basepoint and set $f_k=R\circ \delta_k$ and $f=R\circ\delta$. We have $f_k\to f$ uniformly and $\{e_{i_k},e_{i_k+1},...,e_{i_k+j_k}\}\subset f_k(S^n)$, which is a contradiction of Lemma \ref{boundinglemma}. Thus $j_k$ must be bounded.

Since $J=\{j_k|k\geq 1\}$ is a finite set, $\bigvee_{j\in J}S_j\subset \bbh_n$ is a finite wedge of n-spheres. In particular, $\pi_n\left(\bigvee_{j\in J}S_j\right)=\prod_{j\in J}\bbz$ is a discrete abelian group freely generated by $\{y_j|j\in J\}$. Since $\delta_k\to \delta$ and $q_n$ is continuous, we have $[\delta_k]\to [\delta]$ in $\pi_n(X)$. Consider the retraction $r:X\to \bigvee_{j\in J}S_j$ which collapses $A$ and all n-spheres $S_j$, $j\notin J$ to the basepoint. By functorality, the homomorphism $r_{\#}:\pi_n(X)\to \pi_n\left(\bigvee_{j\in J}S_j\right)$ is continuous and thus $r_{\#}([\delta_k])=y_{j_k}^{i_k}\to r_{\#}([\delta])$. However, the only sequences that converge in a discrete group are eventually constant. Thus $i_k$ must be eventually constant and, in particular, bounded.

Since both $i_k$ and $j_k$ are bounded, the set $\{[\delta_k]|k\geq 1\}=\{(x_{i_k}x_{i_k+1}...x_{i_k+j_k})\ast y_{j_k}^{i_k}|k\geq 1\}$ is a finite subset of $\pi_n(X)$. By Corollary \ref{hausdorff}, $\pi_n(X)$ is Hausdorff. Since finite subsets of Hausdorff spaces are closed,  $\{[\delta_k]|k\geq 1\}$ is closed in $\pi_n(X)$. Finally, since $[\delta_k]\to [\delta]$, we must have $[\delta]\in \{[\delta_k]|k\geq 1\}\subset C$.
\end{proof}


\begin{thebibliography}{10}
\expandafter\ifx\csname url\endcsname\relax
  \def\url#1{\texttt{#1}}\fi
\expandafter\ifx\csname urlprefix\endcsname\relax\def\urlprefix{URL }\fi


\bibitem{AT08} A.~Arhangel'skii, M.~Tkachenko, Topological Groups and Related Structures, Series in Pure and Applied Mathematics, Atlantis Studies in Mathematics, 2008.

\bibitem{BarMil} M.G.~Barratt, J.~Milnor, \textit{An example of anomalous singular theory}, Proc. Amer. Math. Soc. 13 (1962) 293--297.

\bibitem{Br10.1} J.~Brazas, \textit{The topological fundamental group and free topological groups}, Topology Appl. 158 (2011) 779--802.

\bibitem{Brazasdiss} J.~Brazas, \textit{Homotopy Mapping Spaces}, Ph.D. Dissertation, University of New Hampshire, 2011.


\bibitem{Braztopgrp} J. Brazas, \emph{The fundamental group as a topological group}, Topology Appl. 160 (2013) 170–-188.

\bibitem{BrazOpenSub} J. Brazas, \emph{Open subgroups of free topological groups}, Fundamenta Mathematicae 226 (2014) 17-40.


\bibitem{BFqtop} J.~Brazas, P.~Fabel, \emph{On fundamental groups with the quotient topology}, J. Homotopy and Related Structures 10 (2015) 71--91.

\bibitem{Brown06} R.~Brown, \textit{Topology and Groupoids}, Booksurge PLC, 2006.

\bibitem{CMdiscrete} J.S.~Calcut, J.D.~McCarthy, \textit{Discreteness and homogeneity of the topological fundamental group}, Topology Proceedings 34 (2009), 339--349.


\bibitem{CCZAspherical} J.~Cannon, G.~Conner, A.~Zastrow, \emph{One-dimensional sets and planar sets are aspherical}, Topology Appl. 120 (2002) 23--45.


\bibitem{Eda98} K.~Eda, K.~Kawamura, \textit{The fundamental groups of one-dimensional spaces}, Topology Appl. 87 (1998) 163--172.


\bibitem{EKndimhe} K.~Eda, K.~Kawamura, \textit{Homotopy and Homology Groups of the $n$-dimensional Hawaiian earring}, Fund. Math. 165 (2000) 17--28.

\bibitem{Fab10HE} P.~Fabel, \textit{Multiplication is discontinuous in the Hawaiian earring group}, Bull. Polish Acad. Sci. Math. 59 (2011) 77--83

\bibitem{Fab11CG} P.~Fabel, \textit{Compactly generated quasitopological homotopy groups with discontinuous multiplication}, Topology Proc. 40 (2012) 303--309.






\bibitem{GHMMthg} H. Ghane, Z. Hamed, B. Mashayekhy, H. Mirebrahimi, \textit{Topological homotopy groups}, Bull. Belg. Math. Soc. Simon Stevin, 15:3 (2008), 455--464.

\bibitem{GHMMnhawaiian} H. Ghane, Z. Hamed, B. Mashayekhy, H. Mirebrahimi, \textit{On topological homotopy groups of n-Hawaiian like spaces}, Topology Proc, 36 (2010), 255-266.

\bibitem{Griffiths} H.B.~Griffiths, \textit{The fundamental group of two spaces with a point in common}, Quart. J. Math. Oxford, 5 (1954). 175--190.

\bibitem{Hatcher} A. Hatcher, \textit{Algebraic Topology}, Cambridge Univ. Press, 2002.



\bibitem{VZtautop} Z.~Virk, A.~Zastrow, {\it A new topology on the universal path space}, Topology Appl. 231 (2017), 186--196.

\bibitem{WhiteheadEOH} G.W.~Whitehead, \textit{Elements of Homotopy Theory}, Springer Verlag, Graduate Texts in Mathematics. 1978.

\end{thebibliography}
\end{document}